\documentclass[12pt]{amsart}

\usepackage{
amsfonts,
latexsym,
amssymb,
enumerate,
color}

\newcommand{\labbel}[1]{\label{#1} [[{\bf #1}]]}
\renewcommand{\labbel}{\label}

\newcommand{\arxiv}{} 

\newcommand{\journal}[1]{} 

\newtheorem{theorem}{Theorem}[section]
\newtheorem{lemma}[theorem]{Lemma}

\newtheorem{proposition}[theorem]{Proposition} 
 
\newtheorem{corollary}[theorem]{Corollary}

\newtheorem*{theorem*}{Theorem}
\newtheorem*{corollary*}{Corollary}

\theoremstyle{definition}
\newtheorem{definition}[theorem]{Definition}

\theoremstyle{remark}
\newtheorem{remark}[theorem]{Remark}

\newtheorem{disclaimer*}[theorem]{Disclaimer}

\DeclareMathOperator{\cf}{cf}
\DeclareMathOperator{\CAP}{CAP}

\mathchardef\mhyphen="2D

\newcommand{\m}{\mathfrak}

\begin{document}
 
\title
[Sequential compactness, sets of  filters]
{Products of sequentially compact spaces and compactness with respect to a set of filters}

\author{Paolo Lipparini} 
\address{Dipartimento di Matematica\\Viale della Ricca Ricerca Scientifica\\II Universit\`a di Roma (Tor Vergata)\\I-00133 ROME ITALY}
\urladdr{http://www.mat.uniroma2.it/\textasciitilde lipparin}

\keywords{Filter convergence; ultrafilter; sequencewise $\mathcal P$-compact;
 products; subproducts; sequentially compact; Lindel\"of; finally $\lambda$-compact;
 $[ \mu, \lambda ]$-compact; Menger property;
Rothberger property;
\arxiv{splitting number; distributivity number}
} 

\subjclass{03E17; 54A20; 54B10; 54D20}
\thanks{Work partially supported by PRIN 2012 ``Logica, Modelli e Insiemi''. 
\\Work performed under the auspices of G.N.S.A.G.A.
\\The author acknowledges the MIUR Department Project awarded to the
Department of Mathematics, University of Rome Tor Vergata, CUP
E83C18000100006.}

\begin{abstract}
We show that, under suitably general formulations,
covering properties, accumulation properties and
filter convergence are all equivalent notions.
This general correspondence is exemplified in 
the study of products.

\arxiv{
Let $X$ be a product of topological spaces.
We prove that $X$  is sequentially compact
 if and only if 
all subproducts by $\leq \m s$ factors are sequentially compact.
If $\m s = \m h$, then
$X$  is sequentially compact
 if and only if all factors  are sequentially compact and 
all but at most $<\m s$ factors are ultraconnected.
We give a topological proof of 
the inequality $\cf \m s \geq \m h$.
Recall that
$\m  s$ denotes the \emph{splitting number}
and
$\m h$ the \emph{distributivity number}.  
The product $X$ is Lindel\"of if and only if
all subproducts by $\leq \omega_1 $ factors are
Lindel\"of.
Parallel results are obtained 
for final
$ \omega_n$-compactness, $[ \lambda, \mu ]$-compactness,
the Menger and Rothberger properties.} 
\journal{We prove that a product is Lindel\"of if and only if
all subproducts by $\leq \omega_1 $ factors are
Lindel\"of.
Parallel results are obtained 
for final
$ \omega_n$-compactness,
$[ \lambda, \mu ]$-compactness,
the Menger and the Rothberger properties.} 
\end{abstract}

\maketitle

\section{Introduction} \labbel{intro} 

All sections of the paper are mostly self-contained.
\arxiv{The reader interested only in the 
results about sequential compactness 
might skip directly to Section \ref{seq}.
The proof of  $\cf \m s \geq \m h$ 
appears at the beginning of Section \ref{shg}. } 
The reader interested only in 
products of Lindel\"of spaces or, more
generally, finally $ \omega_n$-compact 
or  $[ \mu, \lambda ]$-compact   spaces
might skip  to Section \ref{ex}
and turn back when needed.
Results about the Menger and Rothberger properties are presented in 
Section \ref{mengrot}.
In Section \ref{equivp} we provide a characterization
theorem which shows that,  under suitably
general formulations, covering properties, accumulation properties and
filter convergence are all sides of the same coin.

\smallskip

The main theme of the present note is the following:
given a property $P$ of topological spaces,
find some cardinal $\kappa$ such that 
a product satisfies $P$ if and only if 
all subproducts by $ \leq\kappa$ factors satisfy $P$.
We believe that the problem is best seen
in terms of the general context of 
compactness with respect to a set of filters, 
as introduced in \cite{cmuc,1},
though in many particular cases we get better results by direct means.
 We are going to briefly review the general notion here.

The notion of filter and ultrafilter 
convergence plays a key role 
in the study of products of topological spaces;
see the surveys 
 Stephenson \cite{st},
Vaughan \cite{Vau84}, 
 Garc{\'{\i}}a-Ferreira and  Ko{\v{c}}inac
\cite{filo}, 
and further references there and in \cite{ufmeng,1}.
In \cite{Ko} Kombarov  introduced a local notion
of ultrafilter convergence, where, by ``local'', we mean that
the ultrafilter depends on the sequence intended to 
converge, rather than being fixed in advance.
Kombarov put in a general setting
the idea by Ginsburg and Saks \cite{GS} that countable compactness
has an equivalent formulation in this ``local'' fashion, but cannot
  be defined by ultrafilter convergence in a ``strict'' sense
(that is, in terms of a single fixed ultrafilter). 

In \cite{1}  we extended Kombarov notion to filters,
and showed that this is a proper generalization, 
since, for example, sequential compactness 
can be characterized in terms of such a ``local ''  filter convergence, but
all the filters involved, in this case,
are necessarily not maximal \cite[Section 5]{1}.
We called this general notion
\emph{sequencewise $\mathcal P$-compactness}
(here and in what follows $\mathcal P$ is always a family of filters over some set $I$)
and in \cite{1} we mentioned that sequencewise $\mathcal P$-compactness
 incorporates  
 many compactness, covering 
and convergence properties, including sequential compactness, countable compactness,
 initial $\kappa$-compactness, $[ \lambda ,\mu]$-compactness
and  the Menger 
and Rothberger properties.

In fact, the definition of sequencewise $\mathcal P$-compactness
already appeared hidden and in different 
terminology in a remark contained in \cite{cmuc},
together with equivalent formulations. 
In Section \ref{equivp}, after recalling the relevant definitions
together with some examples,
 we  state the result in full as a theorem,
with some details of the proof.
Then in Section \ref{subprod} 
we show that a product of topological spaces  is
sequencewise $\mathcal P$-compact
if and only if  so is any subproduct with $\leq |\mathcal P|$ factors
(Theorem \ref{th}).
This unifies many former results by W.~W.~Comfort,
J.~Ginsburg, V.~Saks, 
C.~T.~Scarborough, A.~H.~Stone and possibly others.

In the subsequent sections
we apply Theorem \ref{th} to many particular cases,  
usually getting  better bounds
by additional methods.
\arxiv{Actually, in certain situations
Theorem \ref{th} is not used at all.} 
In details, in Section \ref{ex} we find
optimal values in the case both 
of final $ \omega_n$-compactness
and of $[ \omega _n, \lambda ]$-compactness,
for   $\lambda$ singular strong limit of cofinality
$ \omega_n$.
See Theorems \ref{cometapp} and \ref{strlim}.    
On the other hand, the values obtained for the general case of
$[ \mu, \lambda ]$-compactness are essentially those
given by Theorem \ref{th}. See Corollary \ref{lm}.
There are obstacles to extending, say, Theorem \ref{cometapp},
which deals with  final $ \omega_n $-compactness,  
to cardinals  $\geq \omega_ \omega $; this is briefly hinted  
at the end of the section.
 
Section \ref{mengrot} is concerned with the Menger and Rothberger properties
 and their versions for countable covers. 
\arxiv{In Section \ref{seq} 
we deal with sequential compactness and prove the results
mentioned in the abstract. We also show that the assumption 
 $\m s = \m h$ is necessary in Corollary \ref{equiv}.
When $\m s = \m h$ is not assumed,
similar results can only be proved by considering subproducts,
rather than factors.

Finally, Section \ref{shg} contains a proof that
 $\cf \m s \geq \m h$. The idea is general, and suggests a way
of attaching corresponding invariants
to \emph{every} property of topological spaces 
(or even more general objects). 
Some of these invariants are cardinals, 
but it is also natural to consider classes of cardinals.
Some very basic properties of such invariants are discussed.} 

 Throughout the paper we 
shall
 assume no separation axiom.
In order to avoid trivial exceptions, all topological spaces under consideration are assumed to be nonempty.
In all the theorems concerning products, 
if not otherwise mentioned,
repetitions are allowed,
that is, the same space might occur multiple times as a factor
(in other words, we are dealing with products of sequences,
not with products of sets).

\section{Equivalents of sequencewise $\mathcal P$-compactness} \labbel{equivp} 

We 
first 
 recall the basic definitions.
We refer to  \cite{cmuc,1} for further motivations, examples
and references. 

If $X$ is a topological space, 
$I$ is a set, $(x_i) _{i \in I} $ 
is an $I$-indexed sequence  of elements of  $X$
and $F$  is a filter over $I$, 
 a point $x \in X$ is an
 \emph{$F$-limit point} 
 of the sequence
$(x_i) _{i \in I} $
 if 
 $\{ i \in I \mid x_i \in U\} \in F$,
for every open neighborhood $U$ of $x$.
If this is the case, we shall also say that
$(x_i) _{i \in I} $ \emph{$F$-converges} to $x$.
Notice that, in general,
unless the Hausdorff separation axiom is assumed,
such an $x$ is not necessarily unique.
$X$ is \emph{$F$-compact} if every $I$-indexed sequence 
of elements of $X$ $F$-converges to some point of $X$.

\begin{definition} \labbel{timmy} 
If $\mathcal P$ is a family of filters over the same  set $I$,
a topological space  $X$ is \emph{sequencewise $\mathcal P$-compact}
if, for every $I$-indexed sequence of elements of $X$,
there is $F \in \mathcal P$  such that the sequence 
has an $F$-limit point. 
\end{definition}   

As observed  in \cite{1},
sequencewise $\mathcal P$-compactness
 generalizes former notions introduced by
Kombarov \cite{Ko}
and Garc{\'{\i}}a-Ferreira \cite{GFfup}
under different names.
Considering filters which are not necessarily ultra
provides a substantial generalization, as shown in the next remark.

\begin{remark} \labbel{seqc} 
A sequence $(x_n) _{n \in \omega } $
converges  if and only if it has an $F$-limit point,
for the Fr{\'e}chet  filter $F$ over $ \omega$.
This shows that sequential compactness 
is equivalent to sequencewise $\mathcal P$-compactness,
 for an appropriate 
$\mathcal P$.
 Just take $\mathcal P = \{ F_Z \mid Z \in [\omega]^ \omega   \}$,
where $F_Z = \{ W \subseteq \omega \mid Z \setminus W \text{ is finite}\} $
and $[\omega]^ \omega $ denotes the set of all infinite subsets
of $ \omega$.
See \cite{1} for more details. 
\end{remark}

We are now going to see that sequencewise $\mathcal P$-compactness 
admits equivalent 
formulations (a result implicit in \cite{cmuc}), but first we  need some definitions.

\begin{definition} \labbel{abg}   
Let $A$ be a set, and $B, G \subseteq \mathbf P(A)$,
where  $ \mathbf P(A)$
denotes the set of all subsets of $A$.
A topological space
$X$ is 
$ [ B , G ]$-\emph{compact}
if, whenever 
 $( O _ a) _{ a \in A } $ is a sequence of open sets of $X$
such that  
 $( O _ a) _{ a \in K   } $
 is a cover of $X$,
 for every $K \in G$, 
then there is  $H \in B  $ such that  
 $( O _ a) _{ a \in H } $
 is a cover of $X$.
 \end{definition}

Covering properties like compactness and countable compactness,
which involve just one ``starting''
cover, can be expressed as  
particular cases of Definition \ref{abg}
 by taking $G =\{ A \}$. For example,
to get
\emph{countable compactness} take 
$A$ countable, $G =\{ A \}$ and
$B = [A] ^{< \omega } $, the set of all 
finite subsets of $A$.

It is useful to consider the general case in which $G$ contains
more than one set. The reason 
is that  in this way we can also get covering properties
which involve simultaneously many
``starting''
covers, as is the case for the Menger and Rothberger properties.
For example, take
 $A= \omega $, $G$  a partition of $ \omega$ into 
infinitely many  infinite
classes, and $B$ the family of those
sets that intersect each member of 
$G$ in a finite (respectively, one-element)
set. In this case we get the
\emph{Menger} (respectively, \emph{Rothberger}) \emph{property
for countable covers}.
More generally,
if $\lambda,  \mu $ are cardinals,
take $A= \lambda \cdot \mu$,
$G$  a partition of $A$ into 
$ \lambda  $-many pieces of cardinality $\mu$, 
and   
$B$   the family of those
sets that intersect each member of 
$G$ in a  
set of cardinality $<\kappa$. Then
we get the 
property that
 any
 $\lambda$-sequence of open
   covers of size $\leq\mu$ admits a ${<}\kappa$-selection,
a property
denoted by
$R( \lambda, \mu ; {<} \kappa )$ in \cite{ufmeng}.
Clearly, the Menger and Rothberger properties
can be obtained from
$R( \omega , \lambda   ; {<} \omega  )$,
respectively $R( \omega , \lambda   ; {<}2 )$,
by letting $\lambda$ be arbitrarily large.

\begin{definition} \labbel{iep} 
 Suppose that $I$ is a set, 
$E \subseteq  \mathbf P (I)$,
 $\mathcal E$ is a set of subsets of
$ \mathbf P (I)$ and $X$ is a topological space. 

If $(x_i) _{i \in I} $ is
a sequence of elements of $X$, we say that 
$x \in X$ is an
\emph{$E$-accumulation point} of
$(x_i) _{i \in I} $ if 
$\{i \in I  \mid  x_i \in U\} \in E$,
for every open neighborhood $U$ of $x$ in $X$.
 
We say that
$x \in X$ is an
\emph{$\mathcal E$-accumulation point} of $(x_i) _{i \in I} $ if and only if there
is $E \in \mathcal E$   such that 
$x $ is an
$ E$-accumulation point of $(x_i) _{i \in I} $.

We say that $X$ satisfies the \emph{$\mathcal E$-accumulation property}
(the \emph{$E$-\hspace{0 pt}accumulation property})
if  every $I$-indexed sequence of elements of $X$
has some $\mathcal E$-accumulation point
(some $ E$-accumulation point).
\end{definition}   

\begin{remark} \labbel{ep}   
In the particular case when $E$ is a filter, 
$E$-accumulation points are exactly
$E$-limit points; hence in this case
$E$-compactness is the same as the 
$E $-accumulation property.
So, if each member of $\mathcal E$ is a filter,
then the $\mathcal E$-accumulation property 
is the same as sequencewise $\mathcal E$-compactness.
\end{remark} 

\begin{remark} \labbel{cc}     
Countable compactness is another motivating example
for our definition of the $\mathcal E$-accumulation property. 
Indeed, a topological space is countably compact
if and only if it satisfies 
the $ E $-accumulation property,
with $E$ the set of all infinite subsets of $ \omega$.
But countable compactness is also equivalent
to sequencewise $\mathcal P$-\hspace{0 pt}compactness,
for the family $\mathcal P$ of all uniform ultrafilters over $ \omega$. 
The equivalent formulations of countable compactness
can be seen as a prototypical  example of the general
equivalence given by Theorem \ref{equivthm}  below.
See \cite[Remark 2.5]{cmuc} for a full discussion.
\end{remark}

The next theorem is implicit in \cite{cmuc}.
We give details for the reader's convenience. 

\begin{theorem} \labbel{equivthm}
For every class $\mathcal K$ of topological spaces, 
the following conditions are equivalent. 
  \begin{enumerate}[(i)] 
\item 
$\mathcal K$ 
is the class of all $ [ B , G ]$-compact spaces,
for some set  $A$ and sets $B, G \subseteq \mathbf P(A)$.
\item
$\mathcal K$ is the class of all the spaces satisfying
the $\mathcal E$-accumulation property,
for some set $I$ and some family
 $\mathcal E$ of subsets of
$ \mathbf P (I)$ such that  each member of $\mathcal E$
is closed under supersets.
\item 
$\mathcal K$ is the class of all sequencewise $\mathcal P$-compact 
spaces, for some $\mathcal P$.
  \end{enumerate}
 
Given a class $\mathcal K$ and $B$, $G$ satisfying (i),
there is $\mathcal E$ such that $|\mathcal E| \leq |G|  $
and (ii) is satisfied.
Conversely, if (ii) is satisfied for some $\mathcal K$ and $\mathcal E$,
there are $B$ and $G$ such that  (i) is satisfied
and  $|G| \leq |\mathcal E|$.
On the other hand, there are a class $\mathcal K$ 
and some $\mathcal E$ such that (ii) holds, but 
for any $\mathcal P$ satisfying (iii) we have
$|\mathcal P| > |\mathcal E|$.     
 \end{theorem} 

Before proving Theorem \ref{equivthm}
we need some lemmas which may be of independent interest.

\begin{lemma} \labbel{chiusi}
\cite[p. 300]{cmuc} A space $X$ is $ [ B , G ]$-compact
if and only if, 
for every sequence 
 $( C _ a) _{ a \in A } $  of closed sets of $X$,
if  $\bigcap_{ a \in H   } C _ a \neq \emptyset  $,
 for every $H \in B  $, 
then there is  $K \in G$ such that  
 $\bigcap_{ a \in K } C _ a \neq \emptyset  $.
 \end{lemma} 

\begin{proof}
By stating the implication in the definition 
of $ [ B , G ]$-compactness in contrapositive form and
taking complements.
 \end{proof}    

\begin{lemma} \labbel{fromatoi}
Given $A, B, G$ as in the definition of $ [ B , G ]$-compactness, 
let $I=B$ and
$\mathcal E = \{ E_K \mid K \in G \} $ where, for 
$K\in G$, we set  
 $E_K= \{ Z \subseteq B \mid \text{for every } a \in K,
\text{ there is }
 H \in Z \text{ such that } a \in H \} = 
\{ Z \subseteq B \mid \bigcup _{H \in Z} H \supseteq K \} $.

Under the above definitions, $ [ B , G ]$-compactness is equivalent 
to the $\mathcal E$-accumulation property.
 \end{lemma}

 \begin{proof} 
Assume that $X$ satisfies the  $\mathcal E$-accumulation property,
for $\mathcal E$ as in the statement of the lemma, and
assume that $(C_ a) _{a \in A }  $
is a sequence of closed sets such that 
  $\bigcap_{ a \in H   } C _ a \neq \emptyset  $,
 for every $H \in B  $. For every $H \in B  $,
pick $x_H \in \bigcap_{ a \in H   } C _ a$. 
By the $\mathcal E$-accumulation property,
the sequence $(x_H) _{H \in B} $ has 
an $E_K$-accumulation point $x$, for some  
 $K \in G$ (recall that $I=B$).
Thus, for every neighborhood $U$ of $x$,  
$\{ \, H \in B \mid x_H \in U \,\} \in E_K$,
that is, for every $a \in K$, there is some $H$ 
such that $x_H \in U$ and $a \in H$.   
If $a \in H$, then $x_H \in C_a$, by construction;
thus every neighborhood of $x$ intersects $C_a$,
hence   $x \in C_a$, since  $C_a$ is closed.
This holds for every $a \in K$, hence 
 $ x \in \bigcap_{ a \in K } C _ a $,
thus  $\bigcap_{ a \in K } C _ a \neq \emptyset  $.
This implies 
 $ [ B , G ]$-compactness, by Lemma \ref{chiusi}.

Conversely,
 assume that $X$ is  $ [ B , G ]$-compact and let
$(x_H) _{H \in B} $ be a sequence of elements in $X$.
For $a \in A$, let $C_a = \overline{\{ \, x_H \mid a \in H \,\}} $,
thus $x_H \in \bigcap_{ a \in H   } C _ a$, for every $H \in B$.
In particular,  $\bigcap_{ a \in H   } C _ a \neq \emptyset $.
By Lemma \ref{chiusi},
  $\bigcap_{ a \in K } C _ a \neq \emptyset  $, for some $K \in G$. 
Let $x \in \bigcap_{ a \in K } C _ a $. 
 Since $C_a = \overline{\{ \, x_H \mid a \in H \,\}} $,
then, for every $a \in K$ and every neighborhood $U$
of $x$, there is some $H$ such that $a \in H$ and $x_H \in U$.   
This means that, for every neighborhood $U$ of $x$,
$\bigcup \{ \, H \mid x_H \in U \, \} \supseteq K $,
that is, $x$ is an $E_K$-accumulation point of 
 $(x_H) _{H \in B} $, in particular, an 
$\mathcal E$-accumulation point.

Compare the above proof with 
\cite[Theorem 5.8 (1) $\Rightarrow $  (5)]{cmuc}.
\end{proof}

If $E \subseteq  \mathbf P (I)$,
we say that $E$ is \emph{closed under supersets (in $I$)}
if whenever $e \in E$ and 
$e \subseteq f \subseteq I$, then 
$f \in E$.  
We let $E_I^+ = 
\{ a \subseteq I \mid a \cap e \not= \emptyset\text{, for every } e \in E\}$. 
Usually, the set $I$ will be clear from the context and 
 reference to it
shall be dropped.
Notice  that, in case $E$ is a filter, then $E^+$
is the complement in  $\mathcal P(I)$ of the dual ideal 
of $E$. This observation justifies the notation. 
Symmetrically, if $\mathcal P(I) \setminus E$ is an ideal,
then $E^+$ is the filter dual to this ideal.

\begin{lemma} \labbel{e}    
For every $E\subseteq  \mathbf P (I)$,
we have that $E^+$ is closed under supersets. 
Moreover,  $E ^{++}=E $
if and only if $E$ is closed under supersets. 
 \end{lemma} 

\begin{proof} 
The first statement is immediate from the definition.
In particular, $E ^{++}$ is closed under supersets,
hence if $E ^{++}=E $, then $E$ is closed under supersets.

To prove the converse, $E ^{++} \supseteq E $ is immediate
from the definition.
Suppose by contradiction that 
$E ^{++} \subsetneq E $ and $E$ is closed under supersets,
thus there is $f \in E ^{++} \setminus E$
such that $f \cap a \neq \emptyset$, for every $a \in E ^{+}$.
Since $E$ is closed under
supersets and $f \notin E$, then, for every $e \in E$, 
there is 
some $i_e \in e \setminus f$. 
Then, by construction, $a= \{ \, i_e \mid  e \in E \, \} \in E^+ $,
but $a \cap f = \emptyset $, a contradiction.  
\end{proof}

\begin{lemma} \labbel{ae}
\cite[Proposition 3.11]{cmuc} 
Suppose that $X$ is a topological space, $x \in X$, $I$ is a set,
and $(x_i) _{i \in I} $ is a sequence of elements of $X$.
Suppose that  
 $K \subseteq  \mathcal P(I)$, $E=K^+$,
and, 
for  $a \in K$,  put 
$D_a = \overline{\{ x_i \mid i \in a \}}   $.

Then the following conditions are equivalent. 
  \begin{enumerate}   
 \item
x is an $E$-accumulation point   of 
$(x_i) _{i \in I} $. 
  \item  
$ x \in \bigcap _{a \in K}D_a $.  
\end{enumerate}
 \end{lemma} 

 \begin{proof}
If (1) holds and $a \in K$, then,
for every  neighborhood $U$
of $x$,  $e_U=\{ i \in I \mid x_i \in U  \} \in  E$,
thus 
$a \cap e_U \not= \emptyset $, by  the definition of $E$.
If   $i \in a \cap e_U$,
then $x_i \in D_a \cap U$, hence  $ D_a \cap U \not= \emptyset $.
Since $D_a$
is closed, and  $ D_a \cap U \not= \emptyset $,
for every  neighborhood $U$ of $x$, then $x \in D_a$.      
Since $a$ was arbitrary in the above argument, we have  
 $ x \in \bigcap _{a \in K}D_a   $.

If (2) holds and
 $U$ is a neighborhood of $x$, let
$e_U=\{i \in I \mid x_i \in U\}$.  For every   $a \in K$,      
by (2), $x \in D_a$ and, by the definition of $D_a$,
there is $i \in a$ such that $x_i \in U$.   
By the definition of $e_U$, $i \in e_U$, thus
$i \in e_U \cap a \not= \emptyset $.
Thus $e_U \in E= K^+$, for every 
neighborhood of $x$, and this means that
$x$ is an $E$-accumulation point   of 
$(x_i) _{i \in I} $. 
\end{proof} 

\begin{lemma} \labbel{lemult}
Suppose that $I$ is a set, 
 $\mathcal E$ is a set of subsets of
$ \mathbf P (I)$ and every $E \in \mathcal E$
is closed under supersets.
Let $A= \mathbf P (I)$, $G= \{ \, E^+ \mid E \in \mathcal E\, \} $
  and $B= \{ \, i^< \mid  i \in I  \, \} $,
 where, for $i \in I$,  
$i^< = \{\, a \in A \mid i \in a \,\} $.

Then, for every topological space $X$,
the following conditions are equivalent. 
\begin{enumerate}   
 \item   
$X$ satisfies the $\mathcal E$-accumulation property.
\item 
$X$ is $ [ B , G]$-compact.
\end{enumerate}   
 \end{lemma}
 
\begin{proof}
(1) $\Rightarrow $  (2). Using Lemma \ref{chiusi}, 
suppose that  $( C _ a) _{ a \in A } $  are closed sets 
and  $\bigcap_{ a \in H   } C _ a \neq \emptyset  $,
 for every $H \in B  $. 
Because of the definition of $B$, 
this means $\bigcap \{ \, C _ a \mid i \in a \, \} \neq \emptyset $,
 for every $i \in I $. For each $i \in I$, choose 
$x_i \in \bigcap \{ \, C _ a \mid i \in a \, \} $.
By the $\mathcal E$-accumulation property, there
are $E \in \mathcal E$  and $x \in X$ such that
$x$ is an  
$ E$-accumulation point of $(x_i) _{i \in I} $.
If $K=E^+$, then $E= E^{++}= K^+$,
by Lemma \ref{e} and since $E$ is closed under supersets,
by assumption.   If $D_a = \overline{\{ x_i \mid i \in a \}}   $,
for $a \in K$, then, by Lemma \ref{ae},  $ x \in \bigcap _{a \in K}D_a 
\subseteq \bigcap _{a \in K}C_a$.
Since $K=E^+ \in G$, this shows that
$X$ is $ [ B , G]$-compact.

(2) $\Rightarrow $  (1)
 Suppose that $(x_i) _{i \in I} $ is a sequence,
and set $C_a = \overline{\{ x_i \mid i \in a \}}   $,
for $a \in A$. If $H \in B$, say, 
$H=i^<$, then $x_i \in \bigcap _{a \in H}C_a$,
hence, by   $ [ B , G]$-compactness and Lemma \ref{chiusi},
there is $K \in G$ such that   
 $\bigcap _{a \in K}C_a \neq \emptyset $.
By the definition of $G$, $K=E^+$,
for some $E \in \mathcal E$, hence
$E=E^{++}= K^+$,
by Lemma \ref{e}, since $E$ is closed under supersets.
Then   $(x_i) _{i \in I} $ has an $E$-accumulation point,
by Lemma \ref{ae}.
 This proves the $\mathcal E$-accumulation property.
 \end{proof}    

\begin{proof}[Proof of Theorem \ref{equivthm}]
 (i) $\Rightarrow $   (ii) follows from Lemma \ref{fromatoi},
noticing that the $E_K$s defined there
are closed under supersets. 

(ii) $\Rightarrow $  (i) follows from
Lemma \ref{lemult}.

(ii) $\Rightarrow $   (iii) 
If $E$ is closed under supersets,
$x$ is an  $E$-accumulation point
of the sequence  $(x_i) _{i \in I} $  and,
for every neighborhood $U$ of $x$, 
we set $x_U =\{i \in I  \mid  x_i \in U\}$,
then the family $\{ \, x_U \mid  \text{$U$ a neighborhood of $x$}  \,\}$  
generates a filter $E'$ which is contained in $E$,
and $x$ is an  $E'$-accumulation point
of the sequence  $(x_i) _{i \in I} $.

Thus if each $E \in \mathcal E$ is closed under supersets,
then the $\mathcal E$-accumulation property is equivalent to the
$\mathcal E'$-accumulation property, where $\mathcal E'$
is the set of all filters which  are contained in some element of $\mathcal E$.
Then $\mathcal E'$ contains only filters,
and the $\mathcal E'$-accumulation property  
is the same as sequencewise $\mathcal E'$-compactness,
by  Remark \ref{ep}. The implication
 (iii) $\Rightarrow $  (ii) is trivial from the same remark.

Again by Lemmas \ref{fromatoi} and \ref{lemult}, 
the sets $\mathcal E$ and $G$ can be chosen to
satisfy the cardinality requirements. 
On the other hand, 
as mentioned, 
countable compactness 
can be characterized as the $\mathcal E$-accumulation property,
for some one-element $\mathcal E$,
but countable compactness is \emph{not} equivalent to 
sequencewise $\mathcal P$-compactness, for any
one-element
$\mathcal P$.
Indeed, this would mean $F$-compactness,
for the single filter $F$ belonging to $\mathcal P$, but it is well
known that $F$-compactness is preserved under products,
while countable compactness is not.
 \end{proof}  

\begin{corollary} \labbel{equivthmcor}
\cite[Corollaries 3.5 and 3.10]{cmuc} 
For every class $\mathcal K$ of topological spaces, 
the following conditions are equivalent. 
  \begin{enumerate}[(i)] 
\item 
$\mathcal K$ 
is the class of all $ [ B , \{ A \}  ]$-compact spaces,
for some   $A$ and  $B \subseteq \mathbf P(A)$.
\item
$\mathcal K$ is the class of all the spaces satisfying
the $ E$-accumulation property,
for some set $I$ and some 
$E \subseteq  \mathbf P (I)$ closed under supersets.
 \end{enumerate} 
\end{corollary}   

At first sight one could be tempted to believe that condition
(iii) in Theorem \ref{equivthm} is always preferable to condition (ii),
since $\mathcal P$ in (iii) contains only filters, which are surely more manageable
subsets 
of $\mathbf P(I)$ 
than the  members of $\mathcal E$ 
in (ii), which are only supposed to be closed under supersets.
However
the last statement in Theorem \ref{equivthm} shows
that there are cases in which the $\mathcal E$ in (ii) has the advantage
of having  much smaller cardinality than $\mathcal P$.

The proof of Theorem \ref{equivthm}
gives explicit constructions, which are of some use even in particular cases.  
For example, the proof of \ref{equivthm} (i) $\Rightarrow $  (ii)
can be used to express the Menger properties as some kind
of accumulation properties, as we explicitly worked out in \cite[Lemma 2.2(3)]{ufmeng}. 
See also \cite[Corollary 5.13]{cmuc},
which, however, is stated in nonstandard terminology: there we used
the expressions ``Menger property'', respectively ``Rothberger property'',
in place of their versions for  countable covers, that is, 
$R( \omega , \omega  ; {<} \omega  )$,
respectively $R( \omega , \omega  ; {<}2 )$.
Then in \cite[Theorem 2.3]{ufmeng}  
the Menger properties are
explicitly described as sequencewise $\mathcal P$-compactness, for   
some appropriate $\mathcal P$ consisting only of ultrafilters. 
That the Menger properties
 can be described as sequencewise $\mathcal P$-compactness, for some 
$\mathcal P$, follows directly from Theorem \ref{equivthm};
the main point in \cite{ufmeng} is that the members of $\mathcal P$
can be chosen to be ultrafilters; this follows also abstractly from \cite[Corollary 5.3]{1}.

In the other direction, the
proof of \ref{equivthm} (ii) $\Rightarrow $  (i)
can be used to provide alternative formulations
in terms of open covers
 both of
$D$-compactness \cite[Proposition 1.3]{cmuc},
stated here as Proposition \ref{uf},
  and of sequential compactness \cite[Corollaries 5.12 and 5.15]{cmuc}. 
See \cite[Corollaries 2.6, 3.14 and 5.15]{cmuc} for further results of this kind
and 
\cite[Section 4 and Theorems 5.9 and 5.11]{cmuc}
for further theorems 
 dealing with  pseudocompact-like generalizations.

We do not know whether the
 technical assumption that
the members of $\mathcal E$ are closed under supersets
is necessary in condition (ii) in Theorem \ref{equivthm},
namely, whether, for every $\mathcal E$, there is
some $\mathcal E'$ such that the $\mathcal E$-accumulation property 
is equivalent to the $\mathcal E'$-accumulation property  
and all members of $\mathcal E'$ are closed under supersets.

\section{Checking compactness by means of subproducts} \labbel{subprod}

Recall the definition of sequencewise $\mathcal P$-compactness 
from Definition \ref{timmy}.
If $\prod _{j \in J} X_j $ is a product of topological spaces,
a \emph{subproduct} is a space of the form
$\prod _{j \in K} X_j $, for some $K \subseteq J$.
Formally, if $K= \emptyset $, the corresponding subproduct
is  a one-element space
(hence it satisfies all reasonable compactness properties).   
Otherwise, the reader might always exclude the
 case of subproducts with respect to an empty index set. 

\begin{theorem} \labbel{th} 
Let $\mathcal P$ be a nonempty family of filters over some set $I$.
A product of topological spaces is
sequencewise $\mathcal P$-compact
if and only if so is any subproduct with $\leq |\mathcal P|$ factors. 
 \end{theorem}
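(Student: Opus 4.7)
The plan is to treat the two directions of the bi-implication separately. The left-to-right direction is standard and actually yields the stronger statement that \emph{any} subproduct (of any cardinality) of a sequencewise $\mathcal P$-compact product is itself sequencewise $\mathcal P$-compact. Given a sequence $(y_i)_{i \in I}$ in a subproduct $\prod_{j \in J_0} X_j$, I would extend it to a sequence in the full product $\prod_{j \in J} X_j$ by filling in arbitrary values in the coordinates $j \in J \setminus J_0$, invoke the hypothesis to obtain $F \in \mathcal P$ and an $F$-limit $x$ of the extended sequence, and then project $x$ back to the subproduct to obtain the desired $F$-limit. The underlying observation is that if $x$ is an $F$-limit of a sequence in a product, then $\pi_j(x)$ is an $F$-limit of the projected sequence for every $j$, since preimages of open sets under $\pi_j$ are open.

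The content of the theorem is in the converse. I would argue by contraposition: suppose $X = \prod_{j \in J} X_j$ is not sequencewise $\mathcal P$-compact, witnessed by a sequence $(x_i)_{i \in I}$ having no $F$-limit in $X$ for any $F \in \mathcal P$, and produce a subproduct with $\leq |\mathcal P|$ factors that fails sequencewise $\mathcal P$-compactness. The key fact is that $y \in \prod_{j \in J} X_j$ is an $F$-limit of $(x_i)_{i \in I}$ if and only if, for every $j \in J$, the coordinate $\pi_j(y)$ is an $F$-limit of the projected sequence $(\pi_j(x_i))_{i \in I}$. The nontrivial direction here uses that a basic open neighborhood in the product constrains only finitely many coordinates, so the set of indices $i$ for which $x_i$ lies in such a neighborhood is a finite intersection of sets in $F$, hence in $F$.

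Given this equivalence, the failure of $(x_i)_{i \in I}$ to have an $F$-limit translates into the existence, for each $F \in \mathcal P$, of some coordinate $j_F \in J$ such that $(\pi_{j_F}(x_i))_{i \in I}$ has no $F$-limit in $X_{j_F}$. Collecting these coordinates, set $J_0 = \{j_F : F \in \mathcal P\}$, so that $|J_0| \leq |\mathcal P|$. The projected sequence in $\prod_{j \in J_0} X_j$ can have no $F$-limit for any $F \in \mathcal P$, since any such limit would, restricted to the $j_F$-th coordinate, contradict the defining property of $j_F$. Hence this subproduct, with $\leq |\mathcal P|$ factors, is not sequencewise $\mathcal P$-compact, contradicting the hypothesis.

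I do not anticipate any serious obstacle; the principal conceptual step is recognizing that obstructions to $F$-convergence in a product localize to single coordinates, so that a single coordinate per filter suffices to witness a failure. Degenerate situations such as $\mathcal P$ finite are absorbed without modification, with the convention that ``a subproduct with $\leq |\mathcal P|$ factors'' means any subproduct whose index set has cardinality at most $|\mathcal P|$.
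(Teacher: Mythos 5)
Your proposal is correct and follows essentially the same route as the paper: the easy direction via projections (the paper phrases it as preservation under continuous surjective images, you spell it out by extending the sequence and projecting the limit), and the converse by contraposition, using the coordinatewise characterization of $F$-limits in a product to pick one witnessing coordinate $j_F$ per filter $F$ and assembling them into a subproduct of size $\leq |\mathcal P|$. No gaps.
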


\begin{proof}
The only if part is immediate from the  observation that 
sequencewise $\mathcal P$-compactness is preserved under continuous surjective images.

For the other direction, by contraposition, suppose that
$X = \prod _{j \in J} X_j $ is not
sequencewise $\mathcal P$-compact, thus
there is a sequence 
$(x_i) _{i \in I} $ of elements of $X$ such that, 
for no $F \in \mathcal P$, $(x_i) _{i \in I} $
$F$-converges in $X$. 
Notice that 
a sequence in a product $\prod _{j \in J} X_j $  of topological spaces
$F$-converges if and only if, for every $j \in J$, the projection
of the sequence into $X_j$  $F$-converges in $X_j$.
Hence, 
for every $F \in \mathcal P$,
there is some $j_F \in J$ such that
the projection of $(x_i) _{i \in I} $ into $X _{j_F} $ 
does not $F$-converge in $X _{j_F} $.
Choose one such $j_F $ for each 
$F \in \mathcal P$, and let 
$K=\{ j_F \mid  F \in \mathcal P\}$, thus
$|K| \leq |\mathcal P|$. 
 
Let $X' = \prod _{j \in K} X_j $,
and let $(x'_i) _{i \in I} $
be the natural projection of $(x_i) _{i \in I} $ into $X'$.
We claim that the sequence $(x'_i) _{i \in I} $ witnesses that $X'$ is not
sequencewise $\mathcal P$-compact.
Indeed, for every  $F \in \mathcal P$, we have that $(x'_i) _{i \in I} $
does not $F$-converge in $X'$, since the projection of 
 $(x'_i) _{i \in I} $ 
into $X _{j_F} $
(which is the same as the projection of 
$(x_i) _{i \in I} $ into $X _{j_F} $) does 
not $F$-converge in $X _{j_F} $.
Thus we have found a  subproduct with $\leq |\mathcal P|$ factors
which is not
sequencewise $\mathcal P$-compact.
 \end{proof}  

\begin{remark} \labbel{hist}  
Notice that the particular case 
$\mathcal P = \{ F \} $
of Theorem \ref{th} states that
a product is $F$-compact if and only if each factor is $F$-compact 
(however, this does not follow from Theorem \ref{th},
 since it is used in the proof).
Thus Theorem \ref{th} incorporates Tychonoff theorem,
since a topological space is compact if and only if it is $D$-compact,
for every ultrafilter $D$. 

Apparently, besides Tychonoff theorem, the first result
of the form of  Theorem \ref{th} has been 
proved by  Scarborough and Stone
\cite[Theorem 5.6]{SS},
asserting that a product is countably compact, provided that
all subproducts by at most $2 ^{2 ^{\m c} }  $ 
factors are countably compact.
Scarborough and Stone \cite[Corollary 5.7]{SS} 
also obtained the improved value 
$2 ^{2 ^{ \omega  } }  $ for the particular case 
of first countable factors.
Ginsburg and  Saks \cite[Theorem 2.6]{GS} 
then obtained the improved bound 
$2 ^{2 ^{ \omega  } }  $ for powers of a single space, and 
Comfort \cite{Co} and Saks \cite{Sa} 
observed that the methods from \cite{GS} 
give the result for arbitrary factors, a result which 
is a  particular case of Theorem \ref{th},
by Remark \ref{cc} and since there are 
$2 ^{2 ^{ \omega  } }  $ nonprincipal ultrafilters over $ \omega$. 

Saks \cite[Theorem 2.3]{Sa} also proved that a product satisfies 
$\CAP _ \lambda $ if and only if each subproduct by 
$ \leq 2 ^{2 ^ \lambda } $ factors satisfies it; actually, he stated the result in 
terms of an interval of cardinals and in different terminology.
Recall that a topological space is said to satisfy 
$\CAP _ \lambda $ if every subset $Y$ of cardinality $\lambda$
has a \emph{complete accumulation point}, that is, 
a point each  neighborhood of which intersects $Y$ 
in a set of cardinality $\lambda$. 
Saks' result, too, can be obtained as a consequence
of Theorem \ref{th}, 
but some care should be taken of  the case when $\lambda$ 
is singular.

Concerning a related property,
Caicedo \cite[Section 3]{Ca} essentially gave, 
in the present terminology, a characterization
of $[ \mu, \lambda ]$-compactness as
sequencewise $\mathcal P$-compactness, for an appropriate $\mathcal P$.
This will be recalled in Theorem \ref{cai} below. 
Theorem \ref{th} can then be applied in order
to provide a characterization of those products which are 
$[ \mu, \lambda ]$-compact. We shall work this out in 
Corollary \ref{lm}. 
 In the particular cases of initial
$ \omega_n$-compactness and of  $[ \omega _n, \lambda ]$-compactness,
for $\lambda$ singular strong limit, better results can be obtained
using further arguments, as we will show in 
Theorems \ref{cometapp} and  \ref{strlim}. 
 \end{remark} 

Other possible examples of applications
of Theorem \ref{th}  deal with the Menger, the Rothberger and the 
related properties mentioned after Definition \ref{abg}.
However, in this case, too, best results about these properties are
 obtained by direct means: see
\cite{ufmeng} and also  Section \ref{mengrot} here.
\arxiv{A similar situation occurs with regard to sequential compactness, 
as we shall show in Section \ref{seq}.}
\journal{A similar situation occurs with regard to sequential compactness. 
See \cite{p2} and Section \ref{fur} below.}

Notice that the equivalence
of conditions  (i) and (ii)
in \cite[Theorem 2.1]{1}
can be obtained as an immediate consequence of Theorem \ref{th}. 

Let us remark that Theorem \ref{th} stresses the importance
of studying the problem when
sequencewise $\mathcal P$-compactness 
is equivalent to sequencewise $\mathcal P'$-compactness,
for various sets $\mathcal P$ and $\mathcal P'$, as already
mentioned in \cite{1}. 
In particular, given $\mathcal P$, 
Theorem \ref{th} implies that it is useful to characterize
the minimal cardinality of some $\mathcal P'$ 
such that the above equivalence holds.
\arxiv{The cardinality of such a ``minimal'' $\mathcal P'$
is also connected with some other invariants, see 
Definition \ref{shprop} and  
Proposition \ref{thmshp} below.}
\journal{The cardinality of such a ``minimal'' $\mathcal P'$
is also connected with some other invariants.
 See Section 7 in \cite{sssrb}, an unpublished
manuscript from which the present work has been extracted.}

Let $F$ be the trivial filter over $\kappa$,
that is, $F = \{ \kappa  \}$.
Then a topological space $X$ is $F$-compact
if and only if, for every subset $Y$ of $X$ of cardinality
$\leq\kappa$, there is $x \in X$    such that 
every neighborhood of $x$ contains \emph{the whole of} $Y$.
Such spaces are called \emph{$\kappa^+$-filtered}.
See Brandhorst and Ern\'e \cite{BE} 
for further details and characterizations.
Trivially, if $\mathcal P$ is
 a nonempty family of filters over 
$\kappa$, then any $\kappa^+$-filtered space is
sequencewise $\mathcal P$-compact.
The next lemma is trivial, but it has some use
(see the proof of Proposition \ref{prrott}).

\begin{lemma} \labbel{filtered}
If $\mathcal P$ is
 a family of filters over 
$\kappa$ and
$X_1$ is a $\kappa^+$-filtered topological space, 
then 
a product 
$X_1 \times X_2$
is sequencewise $\mathcal P$-compact
if and only if 
 $X_2$ is  sequencewise $\mathcal P$-compact.
 \end{lemma} 

\begin{proof} 
The ``only if'' part is trivial.

For the other direction, suppose that
$X_1$ is  $\kappa^+$-filtered and
 $X_2$ is  sequencewise $\mathcal P$-compact.
Let $(x _ \alpha ) _{ \alpha \in \kappa } $ be a sequence
of elements of
$X_1 \times X_2$.
Since $X_2$ is  sequencewise $\mathcal P$-compact,
there is $F \in \mathcal P$ such that the second projection of
$(x _ \alpha ) _{ \alpha \in \kappa } $ 
$F$-converges in $X_2$.
Since $X_1$ is  $\kappa^+$-filtered, 
 the first projection of 
$(x _ \alpha ) _{ \alpha \in \kappa } $ 
$F$-converges in $X_1$,
hence 
$(x _ \alpha ) _{ \alpha \in \kappa } $ 
$F$-converges in $X_1 \times X_2$.
\end{proof}  
  
A more significant result shall be proved in
Corollary \ref{prodp}, 
where the assumption of 
being $\kappa^+$-filtered shall be replaced
 by initial $2^\kappa$-compactness,
provided that all members of $\mathcal P$ 
are ultrafilters.

\section{Final $ \mu$-compactness and 
 $[ \mu, \lambda ]$-compactness} \labbel{ex}

\subsection*{Final $ \omega _n$-compactness} \labbel{fon} 
In this section we present some generalizations
of the following theorem, which can be obtained as consequence 
of results from \cite{tapp}
(it just needs a small elaboration besides \cite[Corollary 33]{tapp}).
Recall that a topological space 
is \emph{finally $\mu $-compact}
if every open cover has a subcover of cardinality $<\mu $. 

In what follows
we shall freely use the categorical properties of products
and, in case there is no risk of confusion,
we shall identify, say,
$\prod _{j \in J}Y_j$ 
with
$\prod _{j \in H}Y_j   \times
\prod _{j \in J \setminus H}Y_j$,
for $H \subseteq J$.

\begin{theorem} \labbel{cometapp}
If $X$ is a product of topological spaces, then 
the following conditions are equivalent. 
\begin{enumerate}[(i)] 
  \item 
$X$ is finally $ \omega_n$-compact.
\item
All subproducts of $X$ by $ \leq \omega_n$ factors are
finally $ \omega_n$-compact.
\item
All but  $< \omega_n$ factors of $X$  are compact,
and the product of the non compact factors, if any, is finally $ \omega_n$-compact. 
\item
The product of the non compact factors (if any) is finally $ \omega_n$-compact.
  \end{enumerate} 
 \end{theorem}

 \begin{proof}
(i) $\Rightarrow $  (ii) 
and (iii) $\Rightarrow $  (iv) are trivial.

(ii) $\Rightarrow $  (iii) If $n =0$, this is immediate
from Tychonoff theorem. If $n>0$, suppose
by contradiction that there are (at least) $ \omega_n$ factors 
which are not compact. Theorem 2 in \cite{tapp}
(in contrapositive form) asserts that  their product is not
finally $ \omega_n$-compact, contradicting (ii).
Hence all but  $< \omega_n$ factors of $X$  are compact,
and the product of the remaining factors is 
finally $ \omega_n$-compact, by (ii).

(iv) $\Rightarrow $   (i) Letting apart the trivial improper cases, group
together the compact factors, on one hand, and the non compact factors,
on the other hand. Then we get by Tychonoff theorem  that 
$X$ is (homeomorphic to) a product of a compact space with a 
finally $ \omega_n$-compact space, and a standard argument shows that any such
product is finally $ \omega_n$-compact (anyway, a more general result shall
be proved in Corollary \ref{2prod} below).
 \end{proof}  

Since Lindel\"ofness is the same as final $ \omega_1$-compactness, we get the following corollary which might be known, though we know no
reference for it.

\begin{corollary} \labbel{cometapplind}
A product is (linearly) Lindel\"of
if and only if 
all subproducts by $ \leq \omega_1$ factors are
(linearly) Lindel\"of, if and only if 
all but  countably many  factors  are compact
and the product of the non compact factors, if any, is (linearly) Lindel\"of.
 \end{corollary}

Recall that a topological space is \emph{linearly Lindel\"of} 
 if every open cover which is linearly ordered by inclusion
has a countable subcover (some authors use the term \emph{chain-Lindel\"of}).
The linear Lindel\"of case of Corollary \ref{cometapplind}
follows from \cite[Theorem 3]{tapp}, arguing as
in the proof of Theorem \ref{cometapp}.

\subsection*{$[ \omega _n, \lambda ]$-compactness} \labbel{olc} 

We now combine the  arguments  in Theorem \ref{cometapp} with some classical
methods from Stephenson and Vaughan \cite{SV} in order to get a
similar characterization of $[ \omega _n, \lambda ]$-compact
products, for $\lambda$ a singular strong limit cardinal
having cofinality $\geq \omega_n$.
 Recall
that a topological space $X$ is 
\emph{$[ \mu, \lambda ]$-compact}
if every open cover by at most
$\lambda$ sets
has a subcover of cardinality $<\mu $. 
\emph{Initial $\lambda$-compactness} is 
 $[ \omega , \lambda ]$-compactness.

\begin{theorem} \labbel{strlim}
Suppose that 
$ n \in \omega$, $\lambda$ is a singular strong limit cardinal,
and $\cf \lambda \geq \omega _n$. 
If $X$ is a product of topological spaces, then the following conditions are equivalent. 
\begin{enumerate}[(i)]
   \item 
 $X$ is $[ \omega _n, \lambda ]$-compact.
\item
Every subproduct of $X $
by $\leq \omega _n$ factors
 is $[ \omega _n, \lambda ]$-compact.
\item 
All but  $< \omega_n$ factors of $X$  are initially
$\lambda$-compact,
and the product of the non
initially
$\lambda$-compact
 factors (if any) is $[ \omega _n, \lambda ]$-compact. 
\item
 The product of the non
initially
$\lambda$-compact
 factors (if any) is $[ \omega _n, \lambda ]$-compact.
 \end{enumerate} 
 \end{theorem}

Some auxiliary results are needed
before we can give the proof
of Theorem \ref{strlim}.
By $ [\lambda ] ^{< \mu} $ we denote
the set of all subsets of $\lambda$ of cardinality $<\mu $.
This is now a quite standard notation, but
notice that some authors 
(including the present one) sometimes 
used alternative notations for this, such as
$S _ \mu ( \lambda )$, $\mathcal P _{< \mu}( \lambda ) $ and other.
We say that an ultrafilter $D$ over $ [\lambda ] ^{< \mu} $ 
\emph{covers $\lambda$} in case 
$\{ Z \in [\lambda ] ^{< \mu} \mid \alpha \in Z\} \in D$,
for every  $\alpha \in \lambda $. 

\begin{theorem} \labbel{cai}
{\rm (Caicedo \cite{Ca})}
 A topological space is
 $[ \mu, \lambda ]$-compact 
if and only if it is sequencewise $\mathcal P$-compact, for
the family $\mathcal P$ of the ultrafilters
over $ [\lambda ] ^{< \mu} $ which cover $\lambda$.

If $\lambda$ is regular, then 
a topological space is
 $[ \lambda , \lambda ]$-compact 
if and only if it is sequencewise $\mathcal P$-compact, for
the family $\mathcal P$ of the uniform ultrafilters
over $\lambda$.
 \end{theorem} 

Theorem \ref{cai} is essentially proved 
in 
Caicedo \cite[Section 3]{Ca}.
 Full details for the first statement
can be found in \cite[Theorem 2.3]{ufmeng}, 
considering the particular case $\lambda= 1$ therein:
 see the remark at the bottom
of \cite[p. 2509]{ufmeng}.

The second statement is much simpler; actually, it is a reformulation
of some remarks from Saks
\cite{Sa}, in particular, (i) on pp. 80--81 therein.
Notice that, for $\lambda$ regular,
$[ \lambda , \lambda ]$-compactness
is equivalent to $\CAP_ \lambda $,
$C[ \lambda, \lambda ]$ in Saks' notation. 

\begin{proposition} \labbel{uf}
If $D$ is an ultrafilter over $I$,
then a topological space $X$ is
$D$-compact if and only if, for every open cover
$(O_Z) _{Z \in D} $ of $X$, 
there is some $i \in I$ 
such that 
$(O_Z) _{i \in Z \in D} $ is a cover of $X$
 \end{proposition}

See \cite[Proposition 1.3 and Remark 3.12]{cmuc}
for a proof of Proposition \ref{uf}.

\begin{corollary} \labbel{coruf}
If $X$ is an initially $\lambda$-compact
topological space and $2^ \kappa \leq \lambda $,
then $X$ is $D$-compact, for every ultrafilter $D$ 
over some set of cardinality $\leq \kappa$. 
 \end{corollary}

 \begin{proof}
We use Proposition \ref{uf}.
Let 
$(O_Z) _{Z \in D} $ be an open cover of $X$.
Since $|D| \leq  2^ \kappa \leq \lambda$,
 then by initial $\lambda$-compactness
$(O_Z) _{Z \in D} $
has a finite subcover,
say
$ O _{Z_1}, \dots, O _{Z_m} $.
Since $D$ is (in particular)
a filter, $Z_ 1 \cap \dots \cap Z_m  \not= \emptyset   $.
If   $i \in Z_ 1 \cap \dots \cap Z_m $,
then 
$(O_Z) _{i \in Z \in D} $ is a cover of $X$.
By Proposition \ref{uf},
$X$ is $D$-compact. 
 \end{proof}  

Corollary \ref{coruf} can also be obtained as   a consequence
of implications (8) and (5) in \cite[Diagram 3.6]{st}. 

\begin{corollary} \labbel{prodp}
Suppose that $2^\kappa \leq \lambda $ and 
$\mathcal P$ is a family of ultrafilters over some set $I$ 
of cardinality $\leq \kappa$. 
Then the product of an initially $\lambda$-compact and 
of a sequencewise $\mathcal P$-compact topological space 
is sequencewise $\mathcal P$-compact.
 \end{corollary} 

\begin{proof} 
Let $X_1$ be 
initially $\lambda$-compact,
$X_2$ be sequencewise $\mathcal P$-compact,
and let 
$(x_ i) _{ i \in I } $ be a sequence in 
$X_1 \times X_2$.  
By the sequencewise $\mathcal P$-compactness
of $X_2$,
there is some $D \in \mathcal P$  such that the second projection
of $(x_ i) _{ i \in I } $ $D$-converges in 
$X_2$. Since $2^\kappa \leq \lambda $,
the first projection of 
$(x_ i) _{ i \in I  } $ $D$-converges in 
$X_1$, by Corollary \ref{coruf}.
Hence $(x_ i) _{ i \in I } $ $D$-converges in 
$X_ 1 \times X_2$, thus 
$X_ 1 \times X_2$ is sequencewise $\mathcal P$-compact.
\end{proof}  

By $\nu ^{< \mu}$ we denote
$\sup _{ \mu' < \mu } \nu ^{ \mu '} $.
Notice that  
$[ \nu] ^{<\mu} $ has  cardinality
$\nu ^{< \mu} $.

\begin{corollary} \labbel{2prod}
If 
$ 2 ^{\nu ^{< \mu}} \leq \lambda  $, then the product
$X_1 \times X_2$  
of a  
$[ \mu , \nu ]$-compact space $X_1$ and
an initially $\lambda$-compact space $X_2$ 
is 
$[ \mu , \nu ]$-compact.

If the interval $[ \mu , \nu ]$ consists only of regular cardinals,
the assumption $ 2 ^{\nu ^{< \mu}} \leq \lambda  $ above can be relaxed to
 $ 2 ^{\nu } \leq \lambda  $.
 \end{corollary}

\begin{proof}
By Theorem \ref{cai},  
$[ \mu , \nu ]$-compactness is equivalent to 
sequencewise $\mathcal P$-compactness, 
for a family $\mathcal P$ of ultrafilters over
$[ \nu] ^{<\mu} $, a set of cardinality
$\nu ^{< \mu} $.  
Hence the first statement is immediate
from Corollary \ref{prodp} with $\kappa= \nu ^{< \mu } $. 

To prove the last statement,
recall that 
$[ \mu , \nu ]$-compactness is equivalent to
$[ \mu '  , \mu '  ]$-compactness,
for every $ \mu '$ such that 
$\mu \leq \mu ' \leq \nu$. 
From  the second statement in
Theorem \ref{cai},
and applying again Corollary \ref{prodp},
 we get that
$X_1 \times X_2$  is 
$[ \mu '  , \mu '  ]$-compact,
for every $\mu '$ as above, since
$2 ^{ \mu'} \leq 2^ \nu \leq \lambda $. 
Hence
$X_1 \times X_2$ is 
$[ \mu , \nu ]$-compact. 
\end{proof} 

 \begin{proof}[Proof of Theorem \ref{strlim}]
(i) $\Rightarrow $  (ii) 
and 
(iii) $\Rightarrow $  (iv) are 
 trivial.

(ii) $\Rightarrow $  (iii) The case $n=0$
follows from 
Stephenson and Vaughan's Theorem  \cite[Theorem 1.1]{SV},
asserting that if $\lambda$ is a singular strong limit cardinal, then
any product of  initially $\lambda$-compact topological spaces 
is still  initially $\lambda$-compact.
 If $n >0$, suppose by contradiction that there are
$\geq  \omega_n$ factors which are not
initially
$\lambda$-compact.
By (ii), each such factor is 
$[ \omega _n, \lambda ]$-compact,
hence not initially $  \omega _{n-1}$-compact, 
otherwise it would be
initially
$\lambda$-compact.
Hence we have
at least $ \omega_n$ factors which are not
initially $  \omega _{n-1}$-compact,
and, by  \cite[Theorem 6]{tapp},
 their product is not 
$[ \omega _n, \omega _n]$-compact,
hence not $[ \omega _n, \lambda ]$-compact,
  contradicting (ii).

Hence the set of factors which are not
initially
$\lambda$-compact
has cardinality $< \omega_n$,
and their product is 
$[ \omega _n, \lambda ]$-compact by (ii).

(iv) $\Rightarrow $  (i) By the mentioned
Stephenson and Vaughan's theorem  \cite[Theorem 1.1]{SV},
the product of the initially $\lambda$-compact factors, if any,
is still  initially $\lambda$-compact.
By (iv), the product of the non 
initially $\lambda$-compact factors, if any, is 
$[ \omega _n, \lambda ]$-compact. Hence, excluding the improper cases,
$X$ is (homeomorphic to)
the product of 
an initially $\lambda$-compact space with an 
$[ \omega _n, \lambda ]$-compact one.
By Corollary \ref{2prod}, 
and since $\lambda$ is strong limit,
then, for every $\nu < \lambda $,
$X$ is $[ \omega _n, \nu ]$-compact.
Since $ \lambda >\cf \lambda \geq \omega _n$, then
$X$ is $[ \cf \lambda,\cf \lambda  ]$-compact.
Then $X$ is $[ \omega _n, \lambda ]$-compact,
by the well-known fact that
 $[ \omega _n, \nu ]$-compactness,
for every $\nu < \lambda $,
together with
$[ \cf \lambda,\cf \lambda  ]$-compactness imply
$[ \omega _n, \lambda ]$-compactness.
 \end{proof}

\subsection*{$[ \mu, \lambda ]$-compactness} \labbel{lmc} 

\begin{remark} \labbel{strmk} 
    Certain values
obtained in 
Theorems \ref{cometapp} and \ref{strlim}  
are much better than the values
which could be obtained by
a simple direct application
of Theorems \ref{th} and \ref{cai}. 
For example, if 
$\lambda$ is a singular strong limit cardinal,
and $\cf \lambda \geq \omega _n$,
then there are
 $ \kappa = 2 ^{2^{ \lambda } }$
ultrafilters over $ \lambda = \lambda ^{ \omega _n} $.
Then Theorems \ref{th} and \ref{cai} imply that 
some product $X$ is  $ [\omega_n, \lambda ]$-compact
if and only if 
all subproducts of $X$ by $ \leq \kappa $ factors are
finally $ [\omega_n, \lambda ]$-compact. 
However, Theorem \ref{strlim} 
shows that the value of $\kappa$ can be improved to 
$ \omega_n$. 
See the next subsection  for  related comments.

In the more general case 
of arbitrary $\mu $ and $\lambda$,
we have the following corollary 
of Theorem \ref{cai}, 
a corollary in which 
we essentially get the values given by
 Theorem \ref{th}, sometimes with minor improvements. 
\end{remark}

\begin{corollary} \labbel{lm} 
A product of topological spaces is $[ \mu, \lambda ]$-compact 
if and only if so is any subproduct by $\leq 2 ^{2 ^ \kappa }$ factors, where
 $\kappa= \lambda ^{< \mu}$. 
The  value of $\kappa$ can be improved to
$\kappa = \lambda $ in case
the interval $[ \mu, \lambda ]$ contains only
regular cardinals.

More generally, 
a product is $[ \mu, \lambda ]$-compact 
if and only if so is any subproduct by $< \theta $ factors,
where $\theta$ is the smallest cardinal such that both
  \begin{enumerate}[(a)]    \item   
$\theta > 2 ^{2 ^ {\nu} }$,  for every regular $\nu$ such that 
$\mu \leq \nu \leq \lambda $, and 
\item[(b)]
$\theta > 2 ^{2 ^ {\nu ^{<\mu} } }$,  for every singular $\nu$ 
of cofinality $<\mu $  
such that 
$\mu \leq \nu \leq \lambda $.
\end{enumerate}  
\end{corollary} 

\begin{proof}
The first two statements are immediate from
Theorems \ref{th} and \ref{cai}, since there are $2 ^{2 ^ \kappa } $ ultrafilters 
over $ [\lambda ] ^{< \mu} $,
respectively,  $2 ^{2 ^ \nu } $ ultrafilters 
over $ \nu $. Here $\nu$ varies
among the cardinals such that $\mu \leq \nu \leq \lambda $, 
and we are  using again the mentioned fact that
$[ \mu , \lambda  ]$-compactness is equivalent to
$[ \nu   , \nu   ]$-compactness,
for every $ \nu $ such that $\mu \leq \nu \leq \lambda $.

In order to prove the last statement, recall 
 that, for every $\nu$, $[ \cf \nu , \cf \nu ]$-compactness
implies 
$[ \nu , \nu ]$-compactness.
Using this property, 
together with the fact mentioned at the end
of the previous paragraph, it is easy to see
that
$[ \mu , \lambda ]$-compactness
is equivalent to the conjunction of 
  \begin{enumerate}[(1)]
    \item [(i)]
$[ \nu , \nu ]$-compactness,
for every regular $\nu$ with 
$\mu \leq \nu \leq \lambda $,
and 
\item[(ii)]
$[ \mu , \nu ]$-compactness,
for every singular $\nu $ 
of cofinality $<\mu $ and such that 
$\mu \leq \nu \leq \lambda $.
  \end{enumerate}
Now we get the result
 by applying, for each $\nu$, the 
corresponding (and already proved) statements in the first 
paragraph of the corollary (with $\nu$ in place of $\lambda$).
 \end{proof} 

\begin{remark} \labbel{lmsa}   
Corollary \ref{lm} 
complements \cite[Theorem 2.3]{Sa},
which asserts that 
a product satisfies $\CAP_ \nu$,
for every $ \nu \in [ \mu, \lambda ]$,
if and only if so does every subproduct by
$\leq 2 ^{2 ^ \lambda } $ factors.
Notice that if   the interval $[ \mu, \lambda ]$ 
contains only regular cardinals, then  \cite[Theorem 2.3]{Sa}
and Corollary \ref{lm}   overlap,
since it is well-known that, 
 if $\nu$ is a regular infinite cardinal, then
$\CAP_ \nu$ and 
$[ \nu , \nu ]$-compactness
are equivalent notions.
 \end{remark}

\subsection*{Short remarks about final $\mu$-compactness for arbitrary $\mu $} \labbel{fam} 

Under special set-theoretical assumptions, we know improvements 
of all the results proved in the present section.
However, we cannot go exceedingly far. 
Of course, the equivalence of 
(i) and (iv) both in Theorem \ref{cometapp} and in  Theorem \ref{strlim} 
holds for every infinite cardinal in place of $ \omega_n$.  However,
the other equivalences do not necessarily remain true, 
when $ \omega_n$ is replaced by some larger cardinal.

For example, if $ \kappa  $ is a strongly compact cardinal,
then every power of $ \omega$ 
with the discrete topology
is finally $ \kappa  $-compact.
This is a consequence of a classical result
by Mycielski \cite{My},
asserting that if $ \kappa  $ is strongly compact,
then every product of finally $ \kappa  $-compact spaces
is still finally $ \kappa  $-compact. 
This can be obtained also from Theorem \ref{cai} together
with the ultrafilter characterization of
strong compactness.
Thus if  $ \kappa $ is strongly compact, then the analogue of 
Theorem \ref{cometapp} (i) $\Rightarrow $  (iii)
with $ \kappa $ in place of $ \omega_n$
badly fails, since every power of $ \omega$ 
is finally $ \kappa  $-compact, but $ \omega$ is not compact.  
 
Concerning condition \ref{cometapp}(ii), 
first define, for every infinite cardinal $\mu $, 
the cardinal $\m {s}( P_ \mu )$ 
as the smallest cardinal, if it exists, 
such that some product is finally $\mu $-compact
if and only if so is every subproduct  by $< \m {s}( P_\mu )$ factors.
\arxiv{Here $P_\mu$ is intended to be the property
of being finally $\mu $-compact,  
as we want the notation to be consistent with the general one we shall
introduce in Definition  \ref{shprop}.} 
\journal{Here $P_\mu$ is intended to be the property
of being finally $\mu $-compact,  
as we want the notation to be consistent with the general one we have
introduced in \cite[Section 7]{sssrb}.} 
With this terminology, 
clearly
$\m {s} ( P_\omega ) = 2 $,
as a reformulation of Tychonoff theorem. Moreover,
Theorem \ref{cometapp} (i) $\Leftrightarrow $  (ii)
implies that if $n>0$, then $\m {s} ( P_{\omega _n}) = \omega _{n+1} $
Indeed, $ \omega _{n-1} ^{ \omega _{n-1} }  $
is finally $ \omega _n$-compact, but not every 
power of $\omega _{n-1}$ is, hence the value 
given by Theorem \ref{cometapp} cannot be improved.   
 Contrary to the case of $ \omega_n$, 
we know examples in which, under certain set theoretical constraints,
  $\m {s} (P_\mu)$ is far larger than $\mu $. 
Full details  shall be presented  elsewhere,
since they involve deep set theoretical problems.
On the other hand, Mycielski's Theorem mentioned above
implies that if $ \kappa  $ is a strongly compact cardinal, then
$\m {s}( P_ \kappa )=2$.  

We also remark that a characterization of Lindel\"of products
in terms of \emph{factors}, rather than \emph{subproducts}
must necessarily involve deep structural properties of the factors.
Even the product of two Lindel\"of spaces may
turn out to be very incompact.
Moreover, there are three regular Lindel\"of 
spaces whose product has very large
Lindel\"of number, while every pairwise product
of two of them is still Lindel\"of.
 See Usuba \cite{U}. On the other hand, under
a weak set-theoretical assumption,
sequentially compact products can be characterized
in terms of factors.
\journal{See Theorem \ref{prodequiv}(ii) below.}
\arxiv{See Corollary \ref{equiv}(ii) below.} 

Notice that if we apply Theorem \ref{cai}
to final $\mu $-compactness, we get a proper
class $\mathcal P$, since  
final $\mu $-compactness is equivalent 
to $[ \mu, \lambda ]$-compactness, for every 
$\lambda \geq \mu$,
alternatively, equivalent to  
$[ \lambda , \lambda ]$-\hspace{0 pt}compactness, for every 
$\lambda \geq \mu$.
However, we can take good advantage of the theorem
by Mycielski mentioned above, in order to find
  bounds for $\m {s}( P_ \mu )$, when 
$\mu $ is smaller than some strongly compact cardinal.

\begin{proposition} \labbel{my}
Suppose that $\mu $ is an infinite cardinal,
$ \mu \leq \theta  $ and $ \theta $ is strongly compact. 
If $X$ is a product of topological spaces, then 
the following conditions are equivalent. 
\begin{enumerate}[(i)] 
  \item 
$X$ is finally $ \mu $-compact.
\item
All subproducts of $X$ by $ \leq \theta  $ factors are
finally $ \mu $-compact.
  \end{enumerate} 
 \end{proposition}

  \begin{proof}
Suppose that (ii) holds; in particular,
all factors are finally $ \theta   $-compact, since
$\mu \leq \theta  $. By Mycielski Theorem,
$X$ is  finally $ \theta   $-compact.
By Corollary \ref{lm}, for every $\lambda$ 
with $\mu \leq \lambda < \theta  $, $X$ is
$[ \lambda , \lambda ]$-compact, since
strongly compact cardinals are inaccessible.
 Hence $X$ is 
 finally $ \mu $-compact.
 \end{proof}

\section{Menger and Rothberger} \labbel{mengrot}

Recall that a topological space $X$ satisfies the
\emph{Rothberger property} (respectively, the
\emph{Rothberger property for countable covers})
if, given a countable family of  open  covers 
(resp., of countable open covers) of $X$,
one can obtain another cover of $X$ by selecting an open set 
from each one of the given covers.
We get the \emph{Menger property} when we allow 
to select a finite number of open sets from each cover. 
Recall that we are not assuming any separation axiom.

Recall that a topological space is \emph{$\kappa$-filtered} if, 
for every subset $Y$ of $X$ of cardinality
$<\kappa$, there is $x \in X$    such that 
every neighborhood of $x$ contains  $Y$.
A space is \emph{supercompact}  if it is $\kappa$-filtered for all $\kappa$.
Equivalently, a space $X$ is supercompact if and only if 
it has a dense point (a point whose closure is
the whole of $X$), if and only if  $X$  is 
$[ 2, \infty ]$-compact.
Here \emph{$[ 2, \infty ]$-compact} is a shorthand 
 for $[ 2, \lambda ]$-compact, for every cardinal  $\lambda$.  

Notice that if a product of $T_1$ spaces is Rothberger,
then all but finitely many spaces are one-element.
Indeed, a $T_1$ space with more than one element
contains a closed copy of  the two-element discrete topological space
$\mathbf 2$, and $\mathbf 2 ^ \omega $ is not Rothberger.
Hence most results in the present section
are significant only in the (quite exotic)
context of spaces satisfying little or no separation axiom.
We present the results since the proofs need
very little special efforts and, on the other hand,
they might be of some interest due to 
renewed 
 interest in spaces satisfying few separation axioms,
for example, in connection with the specialization (pre)order,
which becomes trivial for $T_1$ spaces, and because of
 significant applications to theoretical
computer science. See, e.~g., Gierz, Hofmann, 
 Keimel,  Lawson, 
 Mislove, and  Scott \cite{CL} and  Goubault-Larrecq
\cite{Go}.
Compare also Vickers \cite{Vi}.
See Nyikos \cite{Ny} for an interesting recent manifesto in support of
the study of spaces satisfying lower separation axioms
from a purely topological point of view.

\begin{proposition} \labbel{prrott}
If
 $X$ is a product of topological spaces, then the following conditions are equivalent. 
\begin{enumerate}[(i)]
   \item 
 $X$ satisfies the Rothberger property.
\item
Every subproduct of $X $
by countably many factors
 satisfies the Rothberger property.
\item 
All but a finite number of factors of $X$ are supercompact,
and the product of the non supercompact factors (if any)
satisfies the Rothberger property.
\item
 The product of the non supercompact factors of $X$ (if any)
satisfies the Rothberger property.
 \end{enumerate} 
 \end{proposition}
 
\begin{proof}
(i) $\Rightarrow $  (ii) and (iii) $\Rightarrow $  (iv) are trivial
(as will be the case for all the corresponding implications throughout the present section).

(ii) $\Rightarrow $ (iii) 
If by contradiction 
there is an infinite number of factors
which are not supercompact, i. e., 
not $[ 2, \infty  ]$-compact, 
then their product is not Rothberger,
by \cite[Proposition 3.1]{ufmeng}.
Hence the number of factors which are not supercompact
is finite,
and their product is Rothberger by (ii).

(iv) $\Rightarrow $  (i)  
As a particular case of Theorem \ref{equivthm} (i) $\Leftrightarrow $ (iii)
we have that,
for every $\lambda$, the Rothberger property 
for covers of cardinality $\leq \lambda$ 
 is equivalent to 
sequencewise $\mathcal P$-compactness, for some $\mathcal P$
(an explicit description of such a $\mathcal P$ can be found in 
\cite[Proposition 4.1]{ufmeng}).
Thus, by Lemma \ref{filtered}, 
and since any product of supercompact spaces
is supercompact,
we get that,
for every $\lambda$, 
$X$ satisfies the Rothberger property 
for covers of cardinality $\leq \lambda$.
This means exactly that $X$  satisfies the Rothberger property.
\end{proof}

\begin{proposition} \labbel{prrot}
If
 $X$ is a product of topological spaces, then the following conditions are equivalent. 
\begin{enumerate}[(i)]
   \item 
 $X$ satisfies the Rothberger property for countable covers.
\item
Every subproduct of $X $
by countably many factors
 satisfies the Rothberger property for countable covers.
\item 
In every factor of $X$ every sequence converges,
except possibly for a finite number of factors,
and the product of such factors (if any)
satisfies the Rothberger property for countable covers.
\item
 The product of the factors of $X$ (if any)
in which there exists a nonconverging sequence 
satisfies the Rothberger property for countable covers.
 \end{enumerate} 
 \end{proposition}

\begin{proof}
(ii) $\Rightarrow $  (iii) It is enough to show that
if we are given an infinite number of topological spaces, each 
with a nonconverging sequence, then their product does not satisfy
the Rothberger property for countable covers.
By \cite[Lemma 4.1 (iv) $\Rightarrow $  (i)]{1},
if some topological space $Y$ has a non convergent sequence, then
$Y$ is not  $[ 2, \omega  ]$-compact, hence an infinite product
of such spaces does not satisfy the Rothberger property for countable covers,
by \cite[Proposition 3.1]{ufmeng}.

(iv) $\Rightarrow $  (i) The property that every sequence converges
is preserved under products. Hence $X$ is the product of a space in which every sequence converges and of a space satisfying 
the Rothberger property for countable covers.
By Theorem \ref{equivthm} (i) $\Leftrightarrow $  (iii),
 the Rothberger property for countable covers
can be characterized as sequencewise $\mathcal P$-compactness,
for some $\mathcal P$, and 
$\mathcal P$  can be chosen to consist  of filters over $ \omega$, by
\cite[Proposition 4.1]{ufmeng},  taking $\kappa= 2$
and $\lambda= \mu = \omega $ there.
On the other hand, a space is $ \omega_1$-filtered 
if and only if in it every sequence converges. 
This is proved in Brandhorst \cite{sb} or 
 Brandhorst and Ern\'e \cite[Lemma 5.1]{BE},
and can be also proved as in \cite[Lemma 4.1]{1}. Since 
$X$ is the product of a space in which every sequence converges and of a space satisfying 
the Rothberger property for countable covers, then
Lemma \ref{filtered} shows that $X$ satisfies 
the Rothberger property for countable covers.
\end{proof}

Given any infinite cardinal $\lambda$, 
Proposition \ref{prrot}
can be  generalized to deal 
with the \emph{Rothberger property for  covers
of cardinality $\leq\lambda$}.
We leave the generalization to the reader.

\begin{corollary} \labbel{prmeng}
If
 $X$ is a product of topological spaces, then the following conditions are equivalent. 
\begin{enumerate}[(i)]
   \item 
 $X$ satisfies the Menger property.
\item
Every subproduct of $X $
by countably many factors
 satisfies the Menger property.
\item 
All but a finite number of factors of $X$ are compact,
and the product of the non compact factors (if any)
satisfies the Menger property.
\item
 The product of the non compact factors of $X$ (if any)
satisfies the Menger  property.
 \end{enumerate} 
 \end{corollary}

\begin{proof}
(ii) $\Rightarrow $ (iii) 
By \cite[Proposition 3.1]{ufmeng},
a product of infinitely many non compact spaces is not Menger,
thus if (ii) holds, then there is only a finite number of non compact spaces,
 and their product is Menger.

(iv) $\Rightarrow $  (i) The product of the compact factors is compact,
hence $X$ is the product of a compact space with a Menger space, and any such product is Menger.
\end{proof}

Results related to the present section appear in \cite{ufmeng},
e.~g., Proposition 3.3 and Corollaries 2.5, 3.4 and 4.2  there. 
We do not know whether results
similar
to the  ones presented in this section
can be proved for the Menger property for countable covers.
However, it follows from \cite[Corollary 2.5]{ufmeng} 
that a product satisfies the
Menger property for countable covers
if and only if so does any subproduct by $ \leq 2 ^{2^ {2^ \omega } } $
factors. 
Again, we do not know whether this is the best possible value.
Notice that a better value does work
in the case of powers of a single space,
or, more generally,
 in case we consider all possible products of spaces in a given family:
see \cite[Corollary 3.2]{ufmeng}.

\journal{  
\section{Further remarks} \labbel{fur} 

In \cite{p2}
we give a  proof of the following theorem.

\begin{theorem} \labbel{prodequiv} 
  \begin{enumerate}  
  \item  
A product of topological spaces is sequentially compact if and only if 
all subproducts by $\leq \m s$ factors are sequentially compact.
\item
Assume that $\m h = \m s$. If $X$ is a product of topological spaces,
then the following conditions are equivalent.
 \begin{enumerate}[(i)]   
\item 
$X$ is sequentially compact.
\item
All factors of $X$ are sequentially compact, and 
the set of 
 factors with a nonconverging sequence has cardinality  $<\m s$.
\item
All factors of $X$ are sequentially compact, and 
all but at most $<\m s$ factors are ultraconnected.
  \end{enumerate} 
 \end{enumerate} 
\end{theorem} 

Recall that a space $X$ is called \emph{ultraconnected} if
no pair of nonempty closed sets of $X$  is disjoint. 
Recall that
$\m  s$ denotes the \emph{splitting number}
and
$\m h$ the \emph{distributivity number} \cite{B}. 
 
The proof of Theorem \ref{prodequiv} 
is direct and does not use Theorem \ref{th}. 
However, it is interesting to discuss the connections
between Theorems \ref{prodequiv} and \ref{th}.

The value $\m s$ in
Theorem \ref{prodequiv}(1) is the best possible value,
since $\m s$ is the smallest cardinal such that 
$\mathbf 2 ^{\m s} $
is not  sequentially compact \cite{Bo,vd}.
Here $\mathbf 2  $ is the two-element discrete space.

If 
in  Theorem \ref{prodequiv}(1)
we replace $\m s$ by 
the rougher estimate $\m c$,
then the theorem   is indeed a consequence of Theorem \ref{th},
since, by Remark \ref{seqc},  there is some 
$\mathcal P$ of cardinality $\m c$ such that 
sequential compactness  is equivalent to sequencewise $\mathcal P$-compactness.
As we mentioned in
\cite[Problem 4.4]{1}, we do not know the value of the smallest cardinal 
$\m {ms}$ such that 
sequential compactness  is equivalent to sequencewise $\mathcal P$-compactness, for some $\mathcal P$ with $|\mathcal P|= \m {ms} $.
Of course, if $\m {ms}$
 were equal to $\m s$, then   Theorem \ref{prodequiv}(1)
would be a direct consequence of Theorem \ref{th}.

It follows from Remark \ref{seqc} that $\m {ms} \leq \m c$.  
Moreover, $\m {ms} \geq \m s$.
If, to the contrary, $\m {ms} < \m s$, then,
by 
Theorem \ref{th}, we could prove Theorem \ref{prodequiv}(1)
for the improved value $\m {ms}$
 in place of $\m s$. However, as
we mentioned above, $\m s $ is the best possible value.
Also the comment after \cite[Problem 4.4]{1} shows, in different terminology,
that $\m {ms} \geq \m s$.

See \cite{p2,sssrb} for further comments and
for the definitions  of invariants related to 
$\m s$ and $\m h$ in such a general context as 
a partial infinitary semigroup with a
specified subset.

 } 

\arxiv{

\section{Sequential compactness} \labbel{seq}

We now exemplify Theorem \ref{th} in the case of sequential
compactness, actually, getting better bounds
by direct computations.
Compare the analogue situation in \cite{1}.
In particular, the previous 
parts of the paper are not necessary for understanding the
present section, apart from a few comments.

Recall that a space $X$ is called \emph{ultraconnected} if
no pair of nonempty closed sets of $X$  is disjoint. 

\begin{definition} \labbel{s}  
The \emph{splitting number} $\m s$ is
the least cardinal
such that 
$\mathbf 2 ^{\m s} $
is not sequentially compact,
where 
$\mathbf 2$ is the two-element discrete topological space.
Usually the definition of $\m s$
is given in equivalent forms, but the present one is
the most suitable for our purposes.
 See Booth \cite[Theorem 2]{Bo} or  van Douwen \cite[Theorem 6.1]{vd}
for a proof of the equivalences,
and \cite{vd}, Vaughan \cite{V} and Blass \cite{B} 
 for further information about $\m s$. 
 \end{definition} 

A proof of the next lemma  can be found in \cite[Lemmata 4.1 and 4.2]{1}.

\begin{lemma} \labbel{da1}
(i) A topological space X is both ultraconnected and sequentially compact
if and only if every sequence in $X$ converges.

(ii) A product of  $\geq \m s$  spaces which are not ultraconnected is not 
sequentially compact.
 \end{lemma}

\begin{proposition} \labbel{atmost} 
If a product  is sequentially compact, then the set of 
 factors with a nonconverging sequence has cardinality  $<\m s$.
\end{proposition}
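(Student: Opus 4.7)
The plan is to argue by contraposition. Assume the set
\(J_0 := \{j \in J : X_j \text{ has a non-converging sequence}\}\)
has cardinality at least \(\m s\); I will construct a sequence in \(\prod_{j \in J} X_j\) with no convergent subsequence, contradicting sequential compactness of the product. The essential observation, motivated by the recalled characterisation of ultraconnectedness, is that a \emph{sequentially compact} space admits a non-converging sequence if and only if it fails to be ultraconnected. Each factor \(X_j\) is sequentially compact (being a continuous image of the product under the \(j\)-th projection), so this observation applies and yields, for every \(j \in J_0\), two nonempty disjoint closed subsets \(C^j_1, C^j_2 \subseteq X_j\). I would then choose \(a_j \in C^j_1\) and \(b_j \in C^j_2\), and set \(U^j_a := X_j \setminus C^j_2\) and \(U^j_b := X_j \setminus C^j_1\); these are open, their union equals \(X_j\), and \(U^j_a\) is a neighbourhood of \(a_j\) missing \(b_j\), while \(U^j_b\) is a neighbourhood of \(b_j\) missing \(a_j\).

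With these data in hand, I would fix a splitting family \(\{S_\alpha : \alpha < \m s\} \subseteq [\omega]^\omega\) and an injection \(\alpha \mapsto j_\alpha\) from \(\m s\) into \(J_0\), and define a sequence \((y_n)_{n \in \omega}\) in \(\prod_{j \in J} X_j\) by
\[
 y_n(j_\alpha) =
\begin{cases}
 a_{j_\alpha} & \text{if } n \in S_\alpha,\\
 b_{j_\alpha} & \text{otherwise,}
\end{cases}
\]
for each \(\alpha < \m s\), and \(y_n(j)\) chosen to be some fixed element of \(X_j\) for indices \(j\) outside the image of the injection. Given any \(A \in [\omega]^\omega\), the splitting property supplies an \(\alpha\) with \(A \cap S_\alpha\) and \(A \setminus S_\alpha\) both infinite, so the \(j_\alpha\)-th projection of \((y_n)_{n \in A}\) takes each of the values \(a_{j_\alpha}, b_{j_\alpha}\) infinitely often. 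Any putative limit \(y^* \in X_{j_\alpha}\) of this projection lies in \(U^{j_\alpha}_a \cup U^{j_\alpha}_b = X_{j_\alpha}\); whichever of these open sets contains \(y^*\) is a neighbourhood of \(y^*\) missing one of \(a_{j_\alpha}, b_{j_\alpha}\), which prevents the projected subsequence from being eventually in it. Therefore \((y_n)_{n \in A}\) does not converge in the product.

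The main obstacle is the first paragraph's equivalence: converting ``\(X_j\) admits a non-converging sequence'' into ``\(X_j\) has disjoint nonempty closed subsets,'' without invoking any separation axiom. The non-trivial direction uses sequential compactness of \(X_j\) to extract two distinct subsequential limits \(a, b\) and an open set separating them in one direction; one then has to argue that if \(X_j\) were ultraconnected, iterating this extraction and intersecting the closures of the subsequential limits (which by ultraconnectedness enjoy the finite intersection property) would force the whole sequence to converge to a common specialisation point, contradicting non-convergence. This step is exactly what the ultraconnectedness recall preceding the statement is set up to support, and it is where the subtlety of dispensing with separation axioms resides.
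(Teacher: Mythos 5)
Your overall architecture is exactly the paper's: reduce to the claim that $\geq \m s$ of the factors fail to be ultraconnected, then use a splitting family to manufacture a sequence in the product with no convergent subsequence. (The paper delegates precisely these two steps to Lemmas 7 and 8 of Part I.) Your splitting-family half is correct and complete: the open cover $X_{j_\alpha}=U^{j_\alpha}_a\cup U^{j_\alpha}_b$ by sets each missing one of the two chosen points is all that is needed, no separation axiom enters, and non-convergence of one projection kills convergence in the product.

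The one genuine gap is in your sketch of ``a sequentially compact space with a nonconverging sequence is not ultraconnected.'' The route you indicate --- extracting subsequential limits and intersecting \emph{their} singleton closures --- does not lead anywhere: the finite intersection property by itself gives no point in the total intersection, and a point lying in the closure of every subsequential limit need not be a limit of the original sequence. The argument that works (for the contrapositive: ultraconnected $+$ sequentially compact $\Rightarrow$ every sequence converges) intersects the closures of the \emph{terms} of the sequence instead. Given $(x_n)_{n\in\omega}$, ultraconnectedness yields a point $y_k\in\overline{\{x_0\}}\cap\dots\cap\overline{\{x_k\}}$ for each $k$; sequential compactness yields a subsequence $y_{k_i}\to y$; and if $U$ is an open neighbourhood of $y$, then for any fixed $n$ one can choose $i$ with $y_{k_i}\in U$ and $k_i\geq n$, and since $y_{k_i}\in\overline{\{x_n\}}$ every open set containing $y_{k_i}$ contains $x_n$, so $x_n\in U$. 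Thus \emph{every} term lies in $U$ and the sequence converges to $y$. With this substituted for your sketch the proof is complete; the converse direction of your stated equivalence, which you never actually use, is the trivial one (the sequence $a,b,a,b,\dots$ cannot converge, by your own covering argument).
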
 

 \begin{proof} 
Suppose by contradiction that 
there are $\geq \m s$ factors
with a nonconverging sequence.
Since each factor is sequentially compact, then,
by 
Lemma \ref{da1}(i), 
 there are $\geq \m s$ factors which are not ultraconnected,
and 
Lemma \ref{da1}(ii)
gives a contradiction.
\end{proof} 

\begin{corollary} \labbel{prod} 
A product of topological spaces is sequentially compact if and only if 
all subproducts by $\leq \m s$ factors are sequentially compact.
\end{corollary}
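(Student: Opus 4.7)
The plan for this corollary is to prove the non-trivial ($\Leftarrow$) direction by partitioning the index set $J$ into a small ``bad'' part (of size less than $\mathfrak{s}$) and a ``trivially sequentially compact'' part, then invoking the hypothesis on the first part and componentwise convergence on the second. The ($\Rightarrow$) direction is immediate because projections are continuous surjections and sequential compactness is preserved under continuous surjective images.

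For the $(\Leftarrow)$ direction, I would set $J_0 = \{j \in J : X_j \text{ has a non-converging sequence}\}$, noting that each $X_j$ is itself sequentially compact by applying the hypothesis to single-factor subproducts. The crucial first step is to bound $|J_0| < \mathfrak{s}$. If $|J_0| \geq \mathfrak{s}$, I would pick $J'_0 \subseteq J_0$ with $|J'_0| = \mathfrak{s}$; then by hypothesis $\prod_{j \in J'_0} X_j$ is sequentially compact, and Proposition \ref{atmost} forces fewer than $\mathfrak{s}$ of its factors to admit a non-converging sequence, contradicting the definition of $J'_0$.

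Once the bound $|J_0| < \mathfrak{s}$ is established, I would decompose $\prod_{j \in J} X_j \cong \prod_{j \in J_0} X_j \times \prod_{j \in J \setminus J_0} X_j$. The first factor is a subproduct by $< \mathfrak{s}$ (hence $\leq \mathfrak{s}$) factors, so sequentially compact by hypothesis. For the second factor, each $X_j$ with $j \notin J_0$ has every sequence convergent by the very definition of $J_0$, so componentwise convergence in the product topology makes every sequence in $\prod_{j \in J \setminus J_0} X_j$ convergent (in particular sequentially compact). Given any sequence in the full product, extract a convergent subsequence in the first factor by its sequential compactness; the corresponding subsequence in the second factor automatically converges, so the whole subsequence converges in the full product.

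The main obstacle is the first step, where Proposition \ref{atmost} does the real work of turning the splitting number into a cardinality bound on the bad index set. Everything after that is routine bookkeeping using only the product decomposition and the elementary characterization of convergence in a product space. I do not anticipate any hidden set-theoretic subtleties beyond the one already captured by Proposition \ref{atmost}.
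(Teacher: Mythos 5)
Your proposal is correct and follows essentially the same route as the paper: the same bad-set $J'$ of factors with a non-converging sequence, the same contradiction via Proposition \ref{atmost} to get $|J'|<\mathfrak{s}$, and the same decomposition into a small sequentially compact subproduct times a factor in which every sequence converges. The only difference is that you spell out the final step (extracting a convergent subsequence in the first factor) slightly more explicitly than the paper does.
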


\begin{proof} 
Necessity is trivial, since we assume
that all the spaces are nonempty and
sequential compactness is preserved
by taking images
of surjective continuous functions.
 For the other direction, suppose that each subproduct of 
$ X= \prod _{j \in J} X_j $ by $\leq \m s$ factors is sequentially compact, and let
$J'=\{j \in J \mid X_j \text{ has a nonconverging sequence}\}$.
If $| J'| \geq \m s$, choose $J'' \subseteq J'$ with
$| J''| = \m s$.
 By assumption,
$\prod _{j \in J''} X_j $ is sequentially compact, 
and we get a contradiction from
Proposition \ref{atmost}.
Thus  $| J'| < \m s$. Now $X$ is homeomorphic to 
$\prod _{j \in J'} X_j  \times \prod _{j \in J \setminus J'} X_j $.
The first factor is sequentially compact by assumption, since 
we proved that  $| J'| < \m s$. For each $j \in J \setminus J'$, we have that every sequence 
on $X_j$ converges, thus in $\prod _{j \in J \setminus J'} X_j $, too,
every sequence converges; a fortiori,
$\prod _{j \in J \setminus J'} X_j $ is  sequentially compact.
 Then $X$ is sequentially compact, being the product of two sequentially compact spaces. 
\end{proof}  

In the context of  $T_1$ spaces
Corollary \ref{prod} is an immediate consequence
of Definition \ref{s}, since any nontrivial $T_1$ space 
contains a closed subspace isomorphic
to $\mathbf 2 $. 
Thus if a product of $T_1$ spaces is sequentially
compact, then all but $<\m s$ factors are one-element spaces.
Then Corollary \ref{prod}  follows,
since if all subproducts of $ \leq \m s$ factors are
sequentially compact, 
then all but $<\m s$ factors are one-element spaces
and the product of the nontrivial factors is
sequentially compact by hypothesis.
Thus the main point of Corollary \ref{prod}
is the case of spaces satisfying few separation
axioms.

The value $\m s$ in
Corollary \ref{prod} is the best possible value:
by Definition \ref{s}, all subproducts of $\mathbf 2 ^{\m s} $ by 
$<\m s$ factors are sequentially compact, but $\mathbf 2 ^{\m s} $
is not.

Notice that if 
in Corollary \ref{prod}
we replace $\m s$ by 
the rougher estimate $\m c$,
then the corollary  is a consequence of Theorem \ref{th},
since, by Remark \ref{seqc},  there is some 
$\mathcal P$ of cardinality $\m c$ such that 
sequential compactness  is equivalent to sequencewise $\mathcal P$-compactness.
As we mentioned in
\cite[Problem 4.4]{1}, we do not know the value of the smallest cardinal 
$\m {ms}$ such that 
sequential compactness  is equivalent to sequencewise $\mathcal P$-compactness, for some $\mathcal P$ with $|\mathcal P|= \m {ms} $.
Of course, if $\m {ms}$
 were equal to $\m s$, then   Corollary \ref{prod}
would be a direct consequence of Theorem \ref{th}.

It follows from Remark \ref{seqc} that $\m {ms} \leq \m c$.  
Moreover, $\m {ms} \geq \m s$.
If, to the contrary, $\m {ms} < \m s$, then,
by 
Theorem \ref{th}, we could prove Corollary \ref{prod}
for the improved value $\m {ms}$
 in place of $\m s$. However, as
we mentioned above, $\m s $ is the best possible value.
Also the comment after \cite[Problem 4.4]{1} shows, in different terminology,
that $\m {ms} \geq \m s$.

We now show that, under a relatively weak 
cardinality assumption, we can replace ``subproducts'' with ``factors''
in  Corollary \ref{prod}.

\begin{definition} \labbel{h}
The \emph{distributivity number}  $\m h$ 
is the smallest cardinal such that there are 
$\m h$ sequentially compact spaces whose product is not sequentially compact.
Usually, the definition of $\m h$ is given in some equivalent form:
see Simon \cite{S} for the proof of the equivalence, and Vaughan \cite{V},
Blass \cite{B} for further information. 
Obviously,  $\m h \leq \m s$. It is known  that $\m h < \m s$ is 
relatively consistent. 
 \end{definition}   

\begin{corollary} \labbel{equiv} 
Assume that $\m h = \m s$. If $X$ is a product of topological spaces,
then the following conditions are equivalent.
 \begin{enumerate}[(i)]   
\item 
$X$ is sequentially compact.
\item
All factors of $X$ are sequentially compact, and 
the set of 
 factors with a nonconverging sequence has cardinality  $<\m s$.
\item
All factors of $X$ are sequentially compact, and 
all but at most $<\m s$ factors are ultraconnected.
  \end{enumerate} 
\end{corollary}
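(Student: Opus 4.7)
The plan is to establish the cycle (1) $\Rightarrow$ (2) $\Leftrightarrow$ (3) $\Rightarrow$ (1). The implication (1) $\Rightarrow$ (2) combines two standard ingredients: each projection $X \to X_j$ is a continuous surjection, and sequential compactness is preserved under such maps; so every factor is sequentially compact, while the cardinality bound on the set of factors with a nonconverging sequence is exactly the content of Proposition \ref{atmost}.

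The equivalence (2) $\Leftrightarrow$ (3) reduces to the observation, already implicit in the proof of Proposition \ref{atmost}, that for a sequentially compact space ``having a nonconverging sequence'' and ``not being ultraconnected'' are equivalent conditions (Lemma 7 of Part I gives one direction, and the other is an easy argument: in a non-ultraconnected space, a sequence alternating between two disjoint nonempty closed sets cannot converge). Under the common hypothesis that every factor is sequentially compact, the two cardinality conditions in (2) and (3) therefore single out the same set of factors.

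The substantive step is (2) $\Rightarrow$ (1), where the hypothesis $\mathfrak{h} = \mathfrak{s}$ is used. Split $J$ as $J_1 \cup J_2$ with $J_1 = \{j \in J : X_j \text{ has a nonconverging sequence}\}$, so that $|J_1| < \mathfrak{s} = \mathfrak{h}$ by (2). Then $X$ is homeomorphic to $\prod_{j \in J_1} X_j \times \prod_{j \in J_2} X_j$. By the very definition of $\mathfrak{h}$, the product of fewer than $\mathfrak{h}$ sequentially compact spaces is sequentially compact, so $\prod_{j \in J_1} X_j$ is sequentially compact. For each $j \in J_2$ every sequence in $X_j$ converges, hence every sequence in $\prod_{j \in J_2} X_j$ converges coordinatewise, so this second factor is trivially sequentially compact. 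Finally, the product of two sequentially compact spaces is sequentially compact (by a standard double-extraction argument), and therefore so is $X$.

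The main obstacle is precisely this last implication: without the equality $\mathfrak{h} = \mathfrak{s}$, the definition of $\mathfrak{h}$ only guarantees sequential compactness for products of $<\mathfrak{h}$ sequentially compact factors, which could be strictly weaker than $<\mathfrak{s}$, leaving the $<\mathfrak{s}$-sized product $\prod_{j \in J_1} X_j$ unaccounted for. The hypothesis $\mathfrak{h} = \mathfrak{s}$ is what closes this gap, and once it is invoked the argument reduces to the decomposition used already in the proof of Corollary \ref{prod}.
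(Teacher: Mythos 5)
Your proposal is correct and follows essentially the same route as the paper: (1)$\Rightarrow$(2) via preservation under projections plus Proposition \ref{atmost}, (2)$\Leftrightarrow$(3) via Lemma 7 of Part I, and (2)$\Rightarrow$(1) by the same splitting $J = J_1 \cup J_2$ and the definition of $\m h$ together with $\m h = \m s$. The only differences are that you spell out a couple of steps the paper leaves implicit (preservation of sequential compactness under continuous surjections, and the easy ``alternating sequence'' direction of the ultraconnectedness equivalence), which is harmless.
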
 

\begin{proof} 
Conditions (ii) and (iii)
are equivalent by 
Lemma \ref{da1}(i).

Condition (i)  implies 
Condition (ii) by Proposition \ref{atmost}.

The proof that (ii) implies (i) is similar to the proof of Corollary \ref{prod}.
Suppose that (ii) holds, and that $ X= \prod _{j \in J} X_j $.
Split $X$ as
$\prod _{j \in J'} X_j  \times \prod _{j \in J \setminus J'} X_j $, where
$J'=\{j \in J \mid X_j \text{ has a nonconverging sequence}\}$.
 By (ii) and the assumption, $|J'| < \m s =\m h$, 
hence, by the very definition of $\m h$
(the one we have presented), 
$\prod _{j \in J'} X_j $ is sequentially compact.
Moreover  $\prod _{j \in J \setminus J'} X_j $
is sequentially compact, since in it every sequence converges, hence also $X$ 
is sequentially compact.
\end{proof}

Under the stronger assumption of the Continuum Hypothesis, we have learned of
the equivalence 
of (i) and (ii) in Corollary \ref{equiv} from Brandhorst \cite{sb}.
See also  Brandhorst and Ern\'e \cite{BE}.
Notice that, when restricted to $T_1$ spaces,
Corollary \ref{prod} follows immediately from Definition \ref{s},
since any nontrivial $T_1$ space contains a closed subspace isomorphic
to $\mathbf 2 $. Similarly, Definition \ref{h} implies that 
if  $\m h = \m s$, then a product $X$ of $T_1$ spaces   is sequentially compact
if and only if 
all factors are sequentially compact and 
the set of 
nontrivial factors has cardinality $ < \m s$.
On the other hand, we are not aware of any former
result of this kind when no separation axiom is assumed,
apart from the mentioned partial result in \cite{sb}.

Notice that the assumption 
$\m h = \m s$ is necessary in Corollary \ref{equiv}.
Indeed, it is now almost immediate 
to show that Conditions (i) and (ii) in Corollary \ref{equiv}
are equivalent if and only if $\m h = \m s$.

\begin{corollary} \labbel{equihs}
The following conditions are equivalent.
  \begin{enumerate}[(i)]    
\item 
$\m h = \m s$
\item
For every product $X$ of topological spaces, 
condition (i) in Corollary \ref{equiv} holds if and only if  condition (ii) there holds. 
\item
For every product $X$  with  
$\m h$ factors, 
condition (ii) in Corollary \ref{equiv} implies condition (i) there. 
  \end{enumerate}
 \end{corollary}

 \begin{proof}
(i) $\Rightarrow $  (ii) is given
by Corollary \ref{equiv} itself,
 and (ii) $\Rightarrow $  (iii) is trivial.

To prove (iii) $\Rightarrow $  (i) we shall prove
the contrapositive.
Suppose  that (i) fails. 
By the  definition of $\m h$ there is a not sequentially compact  product $X$ by
$\m h$ sequentially compact factors.
If 
$\m h < \m s$, then   
condition (ii) in Corollary \ref{equiv} trivially 
holds for such an $X$, while  condition (i) there fails.
Thus condition (iii) in the present corollary fails.
 \end{proof}

\section{A topological 
proof that $\cf \m s \geq \m h$ and a generalization} \labbel{shg}

We begin this section by giving a 
curious and purely topological proof of the inequality 
 $\cf \m s \geq \m h$.
The proof 
 does not use any of the results proved before, but
 relies heavily  on the characterizations of the cardinals
$ \m s $ and $ \m h$ that we have
presented as Definitions \ref{s} and \ref{h}.
 See Blass \cite[Corollary 2.2]{Bl} for another
 proof of $\cf \m s \geq \m h$.
Andreas R. Blass (personal communication, June 2014) 
 has kindly communicated  us
a direct simple proof which uses the standard definitions 
of $\m s$ and $\m h $. 

By the way, Dow and Shelah \cite{DS}
have recently showed that it is consistent that
  $\m s$ is $\m s$ingular, solving a longstanding problem.

\begin{corollary} \labbel{sh} 
 $\cf \m s \geq \m h$.
\end{corollary}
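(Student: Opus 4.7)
The plan is a short contradiction argument combining the two cardinal characterizations recalled just before the corollary. Suppose, toward contradiction, that $\cf \mathfrak s < \mathfrak h$. Then $\mathfrak s$ can be written as the supremum of a family of $\cf \mathfrak s$ cardinals $(\mu_\alpha)_{\alpha < \cf \mathfrak s}$, each strictly smaller than $\mathfrak s$.

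Next, I would invoke the external facts about $\mathfrak s$ already quoted in the discussion following Corollary \ref{prod}: for every $\mu < \mathfrak s$, the space $\mathbf 2^\mu$ is sequentially compact, while $\mathbf 2^{\mathfrak s}$ is not. Thus each of the $\cf \mathfrak s$ spaces $\mathbf 2^{\mu_\alpha}$ is sequentially compact. Since $\cf \mathfrak s < \mathfrak h$, the definition of $\mathfrak h$ (as the smallest cardinality of a family of sequentially compact spaces with non-sequentially-compact product) forces the product $\prod_{\alpha < \cf \mathfrak s} \mathbf 2^{\mu_\alpha}$ to be sequentially compact.

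The final step is the homeomorphism $\prod_{\alpha < \cf \mathfrak s} \mathbf 2^{\mu_\alpha} \cong \mathbf 2^{\sum_{\alpha < \cf \mathfrak s} \mu_\alpha} = \mathbf 2^{\mathfrak s}$, the last equality holding because $\mathfrak s = \sup_\alpha \mu_\alpha$ is an infinite cardinal, so the cardinal sum equals the supremum. This contradicts the fact that $\mathbf 2^{\mathfrak s}$ is not sequentially compact, completing the proof.

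There is essentially no hard step here; the argument is a direct combination of the two definitions. The only point worth being careful about is the identification of the cardinal sum $\sum_{\alpha<\cf\mathfrak s}\mu_\alpha$ with $\mathfrak s$ itself, which holds because $\cf\mathfrak s\le \mathfrak s$ and each $\mu_\alpha<\mathfrak s$ yet $\sup_\alpha\mu_\alpha=\mathfrak s$. The elegance of the proof is precisely that the two topological facts about $\mathbf 2^\mu$ (sequential compactness below $\mathfrak s$ and failure at $\mathfrak s$) together with the product-based definition of $\mathfrak h$ immediately yield the inequality, with no further appeal to the machinery developed earlier in the paper.
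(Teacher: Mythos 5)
Your argument is correct and is essentially identical to the paper's proof: both decompose $\mathbf 2^{\m s}$ as a product of $\cf\m s$ many cubes $\mathbf 2^{\mu_\alpha}$ with $\mu_\alpha<\m s$, apply the characterization of $\m s$ to each factor and the (product-based) characterization of $\m h$ to the whole product, and derive a contradiction. Your extra remark justifying $\sum_\alpha\mu_\alpha=\m s$ is a fine point the paper leaves implicit.
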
 

\begin{proof}
 Suppose by contradiction that $\cf \m s = \lambda < \m h$, 
hence we can express
$\m s$ as 
$ \bigcup _{ \alpha \in \lambda } s_ \alpha  $,
with $|s_ \alpha | < \m s$,
for $ \alpha \in \lambda$; moreover,
 without loss of generality, 
we can take the  $s_ \alpha $'s to be pairwise disjoint.   
Thus
$\mathbf{2} ^{\m s} $ 
is (homeomorphic to)
$\prod_{ \alpha  \in \lambda } \mathbf{2}^{ s _ \alpha }$.
 By the
definition of $\m s$
(the one we have given)
and since
$|s_ \alpha | < \m s$,
for $ \alpha \in \lambda$, then
  each $\mathbf{2}^{ s _ \alpha }$   is sequentially compact. 
By the
definition  of $\m h$, and since $ \lambda < \m h$,
we have that 
$\prod_{ \alpha  \in \lambda } \mathbf{2}^{ s_ \alpha }$
 is sequentially compact.
But then
 $\mathbf{2} ^{\m s} 
\cong \prod_{ \alpha  \in \lambda } \mathbf{2}^{ s _ \alpha }$ would
be sequentially compact, contradicting the definition  of $\m s$.
 \end{proof}

The arguments in the proofs
of Corollary \ref{sh} 
 have a general form and suggest
the idea of attaching some  invariants 
analogue to $ \m s $ and $ \m h$ 
to \emph{every} property $P$  of topological spaces.
The arguments  are relatively simple, 
but there is the possibility that the arguments and the general framework
might turn out to be a useful paradigm for many disparate
situations. 

In fact, the arguments we are going to hint
have really little to do with topology. 
Everything works as well for some property of 
objects in a category 
in which some infinite products or coproducts are  defined. 
However, the right ambient in which the results 
can be stated in their full generality appears to be
the context of partial infinitary semigroups.
We shall sketch the details in Remark \ref{smg}.

\begin{definition} \labbel{shprop}
Let $P$ be any property of (nonempty) topological spaces or, more generally,
a property
defined on a class of objects in which some notion of an infinite
product is defined. 
By definiteness, we shall assume that
\emph{subproduct} means
\emph{product of a nonempty set of factors}.
Alternatively,
we can assume that any one-element space 
(in general, 
the neutral element)
satisfies $P$.

We denote by $\m H(P) $ the class of all cardinals $\kappa \geq 2$ 
such that  there is some product with $\kappa$ factors
with the property that the product does not satisfy $P$, but all 
subproducts by $<\kappa$ factors  satisfy $P$.

Notice that
(if $P$ is preserved under homeomorphisms)
 $\m H(P)  = \emptyset     $ means exactly that a 
nontrivial product satisfies $P$,
whenever all factors satisfy $P$.

We denote by $\m H^*(P) $  the class of all cardinals $\kappa \geq 2$ 
such that  there is some product with $\kappa$ factors
with the property that  all factors satisfy $P$,
but the product does not satisfy $P$ (of course, in many cases,
$\m H^*(P) $ is either empty or an unbounded interval of cardinals).
Notice also that if $\m H^*(P) $ is nonempty
and $\m h(P)= \inf \m H^*(P) $, then
 every product of $<\m h(P)$ spaces satisfying
$P$ still satisfies $P$.

We denote by $\m H_1(P) $ the class of all cardinals  $\kappa \geq 2$ 
such that there is some space $Y$ with the property that
$Y^ \lambda $ satisfies $P$, for every $\lambda< \kappa $,
but $Y^ \kappa $ does not satisfy $P$.
Notice that if $\m H_1(P) $ is nonempty,
 $0  \not= \kappa < \m h_1(P)= \inf \m H_1(P) $, 
and $Y$ satisfies $P$, then $Y^ \kappa $ 
 satisfies $P$.

We denote by $\m H_f(P) $ the class of all cardinals  $\kappa \geq 2$ 
for which there is some nonempty class $\mathcal K$ of topological spaces such that 
every product of $<\kappa$ members from $\mathcal K$ 
satisfies $P$
(in particular, every member of $\mathcal K$ satisfies $P$)
 but
some product of $\kappa$ members
from $\mathcal K$ does not satisfy $P$
(in all the above products we allow repetitions, that is, each member of
$\mathcal K$ might appear multiple times).

Notice that the above classes might be empty, for example,
this happens when $P$ is compactness. On the other hand,
finite cardinals might belong to these classes, for example
$2$, belongs to each class, when $P$ is countable compactness. 

We denote by $\m {s}(P) $ the smallest cardinal $\kappa \geq 2$ 
such that the following holds: for every product $X$,  if all subproducts 
of $X$ by 
$<\kappa$  factors satisfy $P$,
then $X$ satisfies $P$
($\m {s}$ stands for $\m {s}$ubproducts). 
If no such cardinal exists, we conventionally
put $\m {s}(P) = \infty$ and, by convention,
we assume that $ \lambda < \infty$, for every cardinal $\lambda$. 

We denote by $\m {s}_1(P) $ the smallest cardinal $\kappa \geq 2$ 
such that, for every space $Y$, the following holds: if all powers
$Y^ \lambda $, for each 
$ 0 \not= \lambda <\kappa$,  satisfy $P$,
then all powers of $Y$ satisfy $P$.
We apply the same conventions as above, if no such cardinal exists.

We denote by $\m {s}_f(P) $ the smallest cardinal $\kappa \geq 2$ 
such that, for every class $\mathcal K$ of
topological spaces, the following holds: if all products
of $<\kappa$ members from $\mathcal K$ 
(allowing repetitions) satisfy $P$, 
then 
all products
of  members from $\mathcal K$ 
satisfy $P$.

To state parts of the next proposition more concisely,
we shall also introduce the following convention.
If $\m K$ is a nonempty class of cardinals, we let

  \begin{enumerate}[(1)]     
\item
$\sup^+ \m K= \infty$ if $\m K$ has no supremum; 
\item
$\sup^+ \m K= \sup \m K$ if  the supremum of $\m K$ exists but it
is not reached, that is, it is not a maximum. Of course, this can happen only when
$\sup \m K$ is a limit cardinal. 
\item  
$\sup^+ \m K= (\sup \m K)^+$ if  the supremum of $\m K$
exists and it is the maximum of $\m K$.
\end{enumerate} 
 \end{definition}   

We also set $\sup^+ \m K= 2$ in case $ \m K = \emptyset $.
This might look unnatural, but shall simplify some statements.

We are now going to prove some simple facts about the 
above classes and cardinals, including a generalization of
Corollary \ref{sh}.

\begin{proposition} \labbel{thmshp}
Let $P$ be a property of topological spaces, and suppose that $P$ is
invariant under homeomorphisms.
  \begin{enumerate}[(i)]     
\item  
$\m H_1(P) \subseteq \m H_f(P) \subseteq \m H(P) \subseteq \m H^*(P)$.  
\item
If $\kappa \in \m H(P)$, then $1+\cf \kappa \in \m H^*(P)$.  
\item
If $\m H^*(P)  \not= \emptyset   $ then
  \begin{enumerate}[(a)]   
 \item  
$\inf \m H^*(P) \in \m H(P)$, thus 
\item
$\m H(P)  \not= \emptyset   $, 
\item
$\inf \m H^*(P) = \inf \m H(P)$, and
\item
$\inf \m H^*(P)  $ 
is a regular cardinal.  
 \end{enumerate} 
\item
$ \m {s}(P) =   \sup^+ \m H(P)$.
\item
$ \m {s}_1(P) =   \sup^+ \m H_1(P)$.
\item
$ \m {s}_f(P) =   \sup^+ \m H_f(P)$.
\item
$\m {s}_1(P) \leq  \m {s}_f(P) \leq  \m {s}(P)$. 
\item
If $P$ is sequencewise $\mathcal P$-compactness,
for some $\mathcal P$, then
$\m {s}(P) \leq |\mathcal P|^+$.
 \end{enumerate} 
 \end{proposition} 

\begin{proof}
(i)  is clear.

(ii) is similar to Corollary \ref{sh}. 
If $\kappa$ is infinite regular, then (ii) follows from (i).

If $\kappa= n \geq 2$ and
$\prod _{ i < n }  X_i $ witnesses
$\kappa \in \m H^*(P) $,
then 
 $X _{n-1} \times  \prod _{ i < n-1 }  X_i $
witnesses 
$1 + \cf n = 2  \in \m H(P) $.

Suppose that $\kappa$ is singular, thus
$\kappa = \bigcup _{ \alpha \in \cf \kappa }  a_ \alpha $,
for some $a_ \alpha $'s such that 
  $|a_ \alpha| < \kappa $, for $\alpha \in \cf \kappa $;
moreover, without loss of generality, the $a_ \alpha $'s can be taken to be disjoint. 
Let  
$X= \prod _{ \gamma \in \kappa }  X_ \gamma $ witness
$\kappa \in \m H(P) $ and, for
$\alpha \in \cf \kappa $,
let $Y_ \alpha = \prod _{ \gamma \in a_ \alpha }  X_ \gamma$.
Since 
$|a_ \alpha| < \kappa $, for $\alpha \in \cf \kappa $,
then, by the definition of 
$\m H(P) $, each $Y_ \alpha $
satisfies $P$. 
Now notice that $X$ is homeomorphic to 
 $\prod _{ \alpha \in \cf \kappa }  Y_ \alpha $,
and (this realization of) $X$ witnesses 
$\cf \kappa \in \m H^*(P)$.

(iii)(a) 
Suppose that $\m H^*(P) \not = \emptyset $.
Let $\kappa = \inf \m H^*(P) $
and let $\prod _{ \gamma \in \kappa }  X_ \gamma $ witness
$\kappa \in \m H^*(P) $.
By assumption, $\kappa \geq 2$ and each
$X_ \gamma $ satisfies $P$.  
If 
there is  $J \subseteq \kappa $ such that  $2 \leq |J| < \kappa $ and
$\prod _{j \in J}  X_j$ does not satisfy $P$,
 then   $\prod _{j \in J}  X_j$ witnesses 
$|J| \in \m H^*(P) $, contradicting the minimality of $\kappa$.
Thus,
for every $J \subseteq \kappa $ with $1 \leq |J| < \kappa $,
we have that  $\prod _{j \in J}  X_j$ satisfies $P$.
This means that 
$\prod _{ \gamma \in \kappa }  X_ \gamma $ witnesses
$\kappa \in \m H(P) $.

(b) follows trivially from (a).  (c) follows from (a) and (i).
Finally, (d) 
follows from (c) and (ii).

(iv) It is trivial from the definitions that 
if $ \kappa \in \m H(P)     $
then $ \kappa < \m {s}(P)  $,
using our conventions in the case when 
$\m H(P) $ is either unbounded or empty.
Thus $ \m {s}(P) \geq   \sup^+ \m H(P)$.
On the other hand, if $2 \leq \kappa < \m{s}(P)$,
then there is a product $X$ which does not satisfy $P$,
but all subproducts by $<\kappa$  factors satisfy $P$.
Choose some subproduct $X'$ of 
$X$ with a minimal number of factors, say $\kappa'$ factors,
and  in such a way that $X'$ still witnesses  $\kappa < \m{s}(P)$.
As in the proof of (iii)(a), by the minimality of $\kappa'$, 
we have $\kappa' \in \m H(P)$.
Since, by construction, 
  $\kappa \leq  \kappa ' < \m{s}(P)$, we get
 $ \m {s}(P) \leq   \sup^+ \m H(P)$.

(v) and (vi) are similar. 

(vii) follows from (i) and (iv)-(vi). 

(viii) is from Theorem \ref{th}. 
 \end{proof}

\begin{remark} \labbel{pseq}   
Throughout this remark, let $P$ be sequential compactness.
By Definition \ref{h}  we have
$\m h = \inf \m H^*(P) $.
Thus Proposition \ref{thmshp}(iii)(d)
generalizes the well-known result that
$\m h$ is a regular cardinal. 
By Definition \ref{s} we have
$\m s \in \m H_1(P) $,
thus 
$\m s \in \m H(P) $,
by \ref{thmshp}(i).
By \ref{thmshp}(ii) we get
$\cf \m s \in \m H^*(P) $,
hence 
$\cf \m s \geq \m h$,
since $\m h = \inf \m H^*(P) $.
This shows that Proposition \ref{thmshp}
generalizes Corollary \ref{sh}. 
By Corollary \ref{prod} and \ref{thmshp}(iv) 
we get $\m s^+ = \sup^+ \m H(P) $, 
that is, 
$\m s = \sup \m H(P) $, thus
$\m H(P) $ is contained in the interval 
$[ \m h, \m s]$,
since $\m h = \inf \m H^*(P) = \inf \m H(P) $,
by  \ref{thmshp}(iii)(c).
 We do not know the possible general structure
of $\m H(P) $ (of course, it is
trivial in case $\m h = \m s $). It is not difficult,
using Frol\'\i k   sums \cite{Fr}, Juh{\'a}sz and Vaughan
\cite{JV}, to show that
$\m H_1(P) = \m H_f(P)$ 
and that $\m h = \inf \m H_1(P) $.
This comes close to showing
that $\m H(P) =  \m H_1(P) $,
but this is a conjecture, so far.
 \end{remark}

\begin{remark} \labbel{ccp}
As an application of Proposition \ref{thmshp},
one can consider
chain compactness.
If $\lambda \leq \mu$ are infinite cardinals, a topological space $X$ 
is \emph{$[\lambda, \mu]$-chain compact} \cite{V1}  
if, for every cardinal $\nu$ such that 
$\lambda \leq \nu \leq  \mu$, every $\nu$-indexed
sequence of elements of $X$ has a converging cofinal subsequence.
Thus $ [\omega, \omega ]$-chain compactness is the same
as sequential compactness.

A product of countably many  $[\lambda, \mu]$-chain compact
spaces is still $[\lambda, \mu]$-chain compact \cite{V1}.
Thus if $P_{[\lambda, \mu] \mhyphen c}$ is the property of being
$[\lambda, \mu]$-chain compact, then    
$ \m h (P_{[\lambda, \mu] \mhyphen c})= \inf \m H^*(P_{[\lambda, \mu] \mhyphen c}) > \omega $.
By Proposition \ref{thmshp},  
$\m h (P_{[\lambda, \mu] \mhyphen c})$ is a regular cardinal, and 
if $ \kappa \in \m H(P_{[\lambda, \mu] \mhyphen c})$, then
$ \cf \kappa \geq \m h (P_{[\lambda, \mu] \mhyphen c})$.
To the best of our knowledge, it is an open problem to explicitly 
characterize  the cardinal $\m h (P_{[\lambda, \mu] \mhyphen c})$
and the class $\m H(P_{[\lambda, \mu] \mhyphen c})$.  
Some results about products of  $[ \omega , \mu]$-chain compact
spaces can be found in \cite{NV}.
If follows from Theorem  \ref{th}
that $\sup \m H(P_{[\lambda, \mu] \mhyphen c}) \leq 2^ \mu$.  
 \end{remark}   

In general, 
under fairly weak hypotheses on $P$,
we know that 
$\inf \m H_1(P) \allowbreak = \inf \m H(P) $ and 
$\m H_1(P) =  \m H_f(P) $.
We shall present details elsewhere.

For every $\mathcal P$, let $\m {ms}(\mathcal P) =
\inf \{ |\mathcal P'| \mid \text{sequencewise $\mathcal P$-compactness is }
\allowbreak
\text{equivalent to 
sequencewise $\mathcal P'$-compactness} \} $.
By Proposition \ref{thmshp} (viii),
if $P$ is the property of being
sequencewise $\mathcal P$-compact,
then 
$\m {s}(P) \leq (\m {ms}(\mathcal P))^+$.  

The problem of evaluating exactly the above cardinals and
describing the classes defined in 
 \ref{shprop} might be very difficult even in 
special cases, and in general will involve set theory. 

\begin{remark} \labbel{smg}    
As hinted before, all the above notions and results
can be applied in the context of \emph{partial infinitary 
semigroups}, formally, $\Sigma$-algebras satisfying 
properties (U) and (P) in the terminology from 
\cite{HW}. 

For short, in a partial infinitary 
semigroup we have a partially defined infinitary operation
$\sum _{i \in  I} a_{i}$, for every index set $I$. 
Property (U) asserts that if $| I| =1$,
then  $\sum _{i \in  I} a_{i}$ is defined and its outcome
is the only element $a_i$ of the sequence.

Property (P) asserts that  if 
$\sum _{i \in  I} a_{i}$ is defined, 
then, for every partition $(J_k) _{k \in K} $ 
 of $I$, all the sums in the following equality 
are defined, and equality actually holds:
$\sum _{i \in  I} a_{i} = \sum _{k \in  k}
\sum _{i \in  J_k}  a_{i}$.

With the customary foundational caution,
homeomorphism classes of topological spaces  with the Tychonoff product
form  a partial infinitary 
semigroups.

If $S$ is such a $\Sigma$-algebra and 
$P \subseteq S$, let 
$\m H(P) $ be the class of all cardinals $\kappa \geq 2$ 
such that  there are some $I$ 
of cardinality $\kappa$  and some $\sum _{i \in I} a_{i} $ which
is defined, but its outcome is not in $  P$,
 while 
$\sum _{i \in J} a_{i} \in P$, for every $J \subseteq I$ with 
 $|J|<\kappa$. Notice that property (P)
implies that if $\sum _{i \in I} a_{i} $
is defined, then $\sum _{i \in J} a_{i} $
is defined, for every nonempty $J \subseteq I$. 

Let 
$\m H^*(P) $ be the class of all cardinals $\kappa \geq 2$ 
such that  there are some $I$ 
of cardinality $\kappa$  and some $\sum _{i \in I} a_{i} $ which
is defined, but its outcome is not in $  P$,
 while 
$ a_{i} \in P$, for every $i \in I$.

Let all the other invariants be defined in a similar way.

Then Proposition \ref{thmshp} holds in this context,
as well. A few details are made explicit in the next proposition.
\end{remark}

\begin{proposition} \labbel{smgg}
Suppose that $S$ is a partial infinitary semigroup
and  $P \subseteq S$. Then
  \begin{enumerate}[(i)]   
 \item  
$\m H(P)  \subseteq \m H^*(P) $.
 \item
If $\kappa \in \m H(P)$, then $1+\cf \kappa \in \m H^*(P)$.  
\item
If $\m H^*(P) $ is not empty,  then 
$\inf \m H^*(P) \in \m H(P)$, thus 
$\m H(P)  \not= \emptyset   $, 
$\inf \m H^*(P) = \inf \m H(P)$, and
$\inf \m H^*(P)  $ 
is a regular cardinal.  
 \end{enumerate} 
 \end{proposition}  

\begin{proof} 
(i) follows from the definitions and Property (U).

(ii)
If $\kappa$ is an infinite regular
cardinal, then $\kappa = \cf \kappa= 1+\cf \kappa$,
hence  (ii) follows from (i).

If $\kappa$ is finite, say, $\kappa= n \geq 2$ and
$\sum _{ i < n }  a_i $ witnesses
$\kappa \in \m H(P) $,
then 
 $a _{n-1} +  \sum _{ i < n-1 }  a_i $
witnesses 
$1 + \cf n = 1 +1 = 2  \in \m H^*(P) $.

The remaining case is similar to Proposition  \ref{sh}. 
Suppose that $\kappa$ is singular, thus
$\kappa = \bigcup _{ k \in K }  J_ k $,
for some sets $K$  and  $J_ k $ such that 
  $|K|, |J_ k| < \kappa $, for $k \in K$.
Let  
$c= \sum _{ \gamma \in \kappa }  a_ \gamma $ witness
$\kappa \in \m H(P) $. For
$ k \in K$,
let $b_ k  = \sum _{ \gamma \in J_k}  a_ \gamma$.
Since 
  $|J_ k| < \kappa $, for $k \in K$,
then, by the definition of 
$\m H(P) $, each $b_k$
is in $P$. 
By Property (P),
$c= \sum _{ k \in K } b_k $
and this sum witnesses 
$\cf \kappa \in \m H^*(P)$.

(iii) 
Let $\kappa = \inf \m H^*(P) $
and let $\sum _{ \gamma \in \kappa }  a_ \gamma $ witness
$\kappa \in \m H^*(P) $.
By assumption, $\kappa \geq 2$ and each
$a_ \gamma $ is in $P$.  
If 
there is  $J \subseteq \kappa $ such that  $2 \leq |J| < \kappa $ and
$\sum _{j \in J}  a_j \notin P$,
 then   $\sum _{j \in J}  a_j$ witnesses 
$|J| \in \m H^*(P) $, contradicting the minimality of $\kappa$.
Thus, by (U),
for every $J \subseteq \kappa $ with $1 \leq |J| < \kappa $,
we have  $\sum _{j \in J}  a_j \in P$.
This means that 
$\sum _{ \gamma \in \kappa }  a_ \gamma $ witnesses
$\kappa \in \m H(P) $.
The rest follows  from (i) and (ii).
\end{proof}

} 

\section*{Acknowledgements} \labbel{ack} 
We thank anonymous referees of
\cite{cmuc,1} 
for many helpful comments which have been of great use in
clarifying matters related both to 
\cite{cmuc,1} 
and to the present work.

We thank Simon Brandhorst for stimulating correspondence and for a copy of his
Bachelor's thesis.

We thank Gianpaolo Gioiosa for stimulating correspondence.

We thank our students from Tor Vergata University for stimulating questions.


\begin{thebibliography}{99}

\bibitem{Bl} 
 A. Blass, 
   \emph{Nearly adequate sets}, in \emph{Logic and algebra}
  (edited by Y. Zhang),
   Contemp. Math. \textbf{302}, Amer. Math. Soc., Providence, RI, 2002, 33--48.

\bibitem{B} A. Blass, 
   \emph{Combinatorial cardinal characteristics of the continuum}, in  \emph{Handbook of
   set theory},  edited by M. Foreman and A. Kanamori, 395--489, Springer, Dordrecht, 2010.

\bibitem{Bo} A. Booth, 
\emph{A Boolean view of sequential compactness},
 Fund. Math. \textbf{85} (1974),
99--102.



\bibitem{sb} 
S. Brandhorst, \emph{Tychonoff-Like Theorems and Hypercompact Topological Spaces}, Bachelor's thesis, Leibniz Universit\"at, Hannover, 2013.

\bibitem{BE} 
S. Brandhorst, M. Ern\'e,
\emph{Tychonoff-like product theorems for local topological properties},
Topology Proc. \textbf{45} (2015), 121--138.

\bibitem{Ca} X. Caicedo, {\em The Abstract Compactness Theorem Revisited}, in {\em Logic and Foundations of Mathematics (A. Cantini et al. editors),} Kluwer Academic Publishers  (1999), 131--141.

\bibitem{Co} W. W. Comfort, \emph{Review of \cite{GS}}, MR 52  \#1633 (1976). 

\bibitem{vd} E. K. van Douwen, 
   \emph{The integers and topology},
in \emph{Handbook of set-theoretic topology},
   Edited by K. Kunen and J. E. Vaughan, North-Holland Publishing
   Co., Amsterdam (1984), 
   111--167.

\bibitem{DS} 
A. Dow, S. Shelah,
\emph{On the cofinality of the splitting number},
Indag. Math. (N.S.) \textbf{29}
(2018), 382--395.


\bibitem{Fr} Z. Frol\'\i k, \emph{Sums of ultrafilters},
 Bull. Amer. Math. Soc. \textbf{73} (1967), 87--91.


\bibitem{GFfup} S. Garc{\'{\i}}a-Ferreira, \emph{On FU($p$)-spaces and $p$-sequential spaces},
Comment. 
   Math. Univ. Carolinae, \textbf{32} (1991),  161--171.


\bibitem{filo} S. Garc{\'{\i}}a-Ferreira, Lj. D. R. Ko{\v{c}}inac, 
  \emph{Convergence with respect to ultrafilters: a survey},  Filomat
 \textbf{10} (1996),   1--32.

\bibitem{CL} G. Gierz, K. H. Hofmann, K.  Keimel, J. D. Lawson, 
              M. Mislove, and D. S. Scott, 
 \emph{Continuous lattices and domains},
 Encyclopedia of Mathematics and its Applications
 \textbf{93},
 Cambridge University Press, Cambridge,
2003.

\bibitem {GS} J. Ginsburg and V. Saks, \emph{Some applications of ultrafilters in topology}, Pacific J. Math. \textbf{57} (1975), 403--418.


\bibitem[Go]{Go}  J. Goubault-Larrecq, \emph{Non-Hausdorff topology and
   domain theory},   New Mathematical Monographs \textbf{22}, Cambridge University Press,
   2013.

\bibitem{HW}  U. Hebisch,  H. J. Weinert, \emph{Semirings. Algebraic theory and applications in computer science} (Transl. of the 1993 German edition), Series in Algebra \textbf{5}, Singapore: World Scientific Publishing (1998).


\bibitem{JV} I. Juh\'asz  and J. E. Vaughan, \emph{Countably compact hyperspaces and Frol\'\i k  sums},   Topology Appl. \textbf{154} (2007), 2434--2448.

\bibitem{Ko} 
A. P. Kombarov, 
   \emph{Compactness and sequentiality with respect to a set of ultrafilters},
   Vestnik Moskov. Univ. Ser. I Mat. Mekh. \textbf{95} (1985), 15--18,
 English translation in Moscow Univ. Math. Bull. \textbf{40}, 
   15--18 (1985).

\bibitem{tapp} P. Lipparini,  {\em Compact factors in finally compact products of topological spaces}, Topology  Appl. \textbf{153} (2006), 1365--1382.

\bibitem{cmuc} P. Lipparini,  \emph{A very general covering property},
Comment. 
   Math. Univ. Carolinae \textbf{53} (2012), 281--306. 

\bibitem{ufmeng} P. Lipparini,  \emph{A characterization of the Menger property by means of ultrafilter convergence}, Topology Appl. \textbf{160 (18)} (2013), 2505--2513.

\bibitem{1} P. Lipparini,  \emph{Topological spaces compact with respect to a set of filters}, Cent. Eur. J. Math. \textbf{12(7)}  (2014), 991--999.


\bibitem{My}  I. Mycielski, \emph{Two remarks on Tychonoff's product
   theorem}, Bull. Acad. Polon. Sci. S\'er. Sci. Math. Astronom. Phys. \textbf{12}  
   (1964), 439--441. 

\bibitem{Ny}P. Nyikos, \emph{Sequential extensions of countably compact spaces},
 Topol. Proc. \textbf{31} (2007), 651--665. 


\bibitem{NV}  P. J. Nyikos,  J. E. Vaughan,
 \emph{Sequentially compact, {F}ranklin-{R}ajagopalan spaces},
{Proc. Amer. Math. Soc.},
\textbf{101}, 149--155
(1987).


\bibitem{Sa} V. Saks, \emph{Ultrafilter invariants in   topological spaces}, Trans. Amer. Math. Soc. \textbf{241} (1978), 79--97.

\bibitem{SS}  C. T. Scarborough, A. H. Stone, 
   \emph{Products of nearly compact spaces},
   Trans. Amer. Math. Soc. \textbf{124}, 1966, 131--147.

\bibitem{S} P. Simon, 
   \emph{Products of sequentially compact spaces},
   Proceedings of the Eleventh International Conference of Topology
   (Trieste, 1993),
   Rend. Istit. Mat. Univ. Trieste \textbf{25} (1994), 447--450.


\bibitem{st} R. M. Stephenson, Jr., 
\emph{Initially {$\kappa$}-compact and related spaces}, in
\emph{Handbook of set-theoretic topology},
{North-Holland},
{Amsterdam},
{1984},
603--632.

\bibitem{SV} R. M. Stephenson, Jr., J. E. Vaughan, 
 \emph{Products of initially $m$-compact spaces},
   Trans. Amer. Math. Soc. \textbf{196} (1974), 177-189.

\bibitem{U} T. Usuba,
\emph{{$G_\delta$}-topology and compact cardinals},
{Fund. Math.} \textbf{246},
(2019)
 71--87.
 

\bibitem{V1}  J. E. Vaughan,  
\emph{Products of {$[a,b]$}-chain compact spaces}, in 
\emph{General topology and its relations to modern analysis and
algebra, {IV} ({P}roc. {F}ourth {P}rague {T}opological
              {S}ympos., {P}rague, 1976), {P}art {B}},
 473--476, (1977).


\bibitem{Vau84} 
J. E. Vaughan,
\emph{Countably compact and sequentially compact spaces},
 in
\emph{Handbook of set-theoretic topology},
{North-Holland},
{Amsterdam},
{1984},
{569--602}.

\bibitem{V}  J. E. Vaughan,  
   \emph{Small uncountable cardinals and topology},
   with an appendix by S. Shelah, in  \emph{Open problems in topology}, edited by J. van Mill and G. M. Reed,  195--218,
   North-Holland, Amsterdam, 1990.

\bibitem{Vi} S. Vickers, 
\emph{Topology via logic},
Cambridge Tracts in Theoretical Computer Science
\textbf{5},
Cambridge University Press, Cambridge
(1989).


\end{thebibliography}
\end{document}